\newcolumntype{P}[1]{>{\centering\arraybackslash}p{#1}}
\newcolumntype{M}[1]{>{\centering\arraybackslash}m{#1}}
\newtheorem{theorem}{Theorem}[section]
\newtheorem{proposition}[theorem]{Proposition}
\newtheorem{lemma}[theorem]{Lemma}
\theoremstyle{definition}
\newtheorem{example}[theorem]{Example}
\newtheorem{remark}[theorem]{Remark}
\newcommand{\PP}{\mathbb{P}}
\newcommand{\CC}{\mathbb{C}}
\newcommand{\FF}{\mathbb{F}}
\newcommand{\ZZ}{\mathbb{Z}}
\newcommand{\cO}{\mathcal{O} }
\newcommand{\cE}{\mathcal{E} }
\newcommand{\cF}{\mathcal{F} }
\newcommand{\cH}{\mathcal{H} }
\newcommand{\cI}{\mathcal{I} }
\newcommand{\cM}{\mathcal{M} }
\newcommand{\cL}{\mathcal{L} }
\newcommand{\cQ}{\mathcal{Q} }
\newcommand{\cU}{\mathcal{U} }
\newcommand{\enm}[1]{\ensuremath{#1}}
\newcommand{\cal}[1]{\mathcal{#1}}
\renewcommand{\bar}[1]{\overline{#1}}
\newcommand{\Oo}{\enm{\cal{O}}}
\newcommand{\rH}{\mathrm{H} }
\newcommand\bG{\mathbf{G}}
\newcommand\bH{\mathbf{H}}
\newcommand\bR{\mathbf{R}}
\newcommand\coker{\mathrm{coker}}
\newcommand\Pic{\mathrm{Pic}}
\newcommand{\fsl}{\mathfrak{sl}}
\def\Sym{\mathrm{Sym} }
\def\Hom{\mathrm{Hom} }
\def\Ext{\mathrm{Ext} }
\def\Gr{\mathrm{Gr} }
\def\SL{\mathrm{SL}}
\def\lr{\rightarrow}
\newcommand{\ses}[3]{0\lr{#1}\lr{#2}\lr{#3}\lr 0}
\begin{document}

\title{Quartic curves in the quintic del Pezzo threefold}
\date{\today}

\author{Kiryong Chung}
\address{Department of Mathematics Education, Kyungpook National University, 80 Daehakro, Bukgu, Daegu 41566, Korea}
\email{krchung@knu.ac.kr}

\author{Jaehyun Kim}
\address{Research Institute for Mathematical Sciences, Kangwon National University, 1 Kangwondaehak -gil, Chuncheon-si, Gangwon-do, Korea}
\email{kjh6691@kangwon.ac.kr}

\author{Jeong-Seop Kim}
\address{Department of Mathematics Education, Sunchon National University, 255 Jungang-ro, Suncheon-si, Jeollanam-do 57922, Republic of Korea}
\email{jeongseop@scnu.ac.kr}

\keywords{del Pezzo threefold, Rational curve, Torus fixed curve, Hilbert scheme}
\subjclass[2020]{14E05, 14E15, 14M15.}

\begin{abstract}
In this paper, we prove that the Hilbert scheme $\mathbf{H}_4(X_5)$ of rational quartic curves on the quintic del Pezzo threefold $X_5$ is isomorphic to a Grassmannian bundle over the Hilbert scheme of lines on $X_5$.
In particular, $\mathbf{H}_4(X_5)$ is smooth and irreducible.
Our approach builds upon the geometry of rational quartic curves on $X_5$ studied by Fanelli--Gruson--Perrin in their work on the moduli space of stable maps to $X_5$.
\end{abstract}



\maketitle
\section{Introduction and results}
We work over the field $\CC$ of complex numbers.

\subsection{Related works and the result} Fano varieties are covered by families of rational curves, which play an essential role in understanding the geometry of the underlying varieties.
For example, lines and conics are used in Iskovskikh's classification of Fano threefolds \cite{IP99}, and the moduli space of lines is used by Clemens and Griffiths to disprove the rationality of cubic threefolds \cite{CG72}.
On the other hand, the geometry of the moduli spaces of rational curves is interesting in its own right, providing new examples of intriguing varieties (cf. \cite{BD85}), and their compactifications draw significant attention in the context of enumerative geometry (cf. \cite{FP97}).

Let $X$ be a projective variety with fixed embedding $X\subset \PP^r$ and $\bH_d(X)$ be the Hilbert scheme parametrizing curves $C$ in $X$ with Hilbert polynomial $\chi(\Oo_C(m))=dm+1$. For smooth Fano threefolds $X$ with minimal Betti numbers $b_2(X)=1$ and $b_3(X)=0$, some studies have been done for rational curves on $X$ with lower degree $d$.
The geometry of various compactifications of the moduli of rational curves on Fano varieties $X$ has been extensively studied by many researchers, including the first-named author --- after studies of various compactifications of the space of twisted cubics in the projective space $\PP^3$ \cite{PS85, VX02}, their geometries were compared in \cite{CK11}. The case of $X=\PP^3$ and $d=4$ is also studied in \cite{CCM16}.
For studies on the quadric threefold $Q^3$, we refer not only to \cite{CHY23} but also to \cite{Per02} in the context of rational curves on homogeneous varieties, and to \cite{HRS04} in the context of rational curves on hypersurfaces.
In addition, we may refer to \cite{Ili94, San14, CHL18, Chu22} for the quintic del Pezzo threefold $X_5$, and to \cite{MU83, Kuz97} for the prime Fano threefolds $V_{22}$ of degree $(-K_{V_{22}})^3=22$.
This paper is devoted to the study of the case where $X=X_5$ and $d=4$, a parallel continuation of the authors' previous work on the case where $X$ is the Mukai-Umemura variety $U_{22}$ and $d=4$ \cite{CKK24+}.
In this paper, contrary to \cite{CKK24+}, which only describes the local geometry, we succeed in describing the global geometry of a space of rational curves as follows.

\begin{theorem}
Let $\bH_4(X_5)$ be the Hilbert scheme that parametrizes curves $C$ on the quintic del Pezzo threefold $X_5$ with Hilbert polynomial $\chi(\Oo_C(m))=4m+1$.
Then $\bH_4(X_5)$ is isomorphic to a~$\Gr(3,5)$-bundle over $\PP^2$.
\end{theorem}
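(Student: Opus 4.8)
The plan is to realize $\bH_4(X_5)$ through the residuation of a line in a three-dimensional linear section of $X_5$, using as input the identification $\bH_1(X_5)\cong\PP^2$ of the Hilbert scheme of lines together with the description of rational quartics due to Fanelli--Gruson--Perrin. First I would pin down the numerical picture: a rational quartic $C\subset X_5$ spans exactly a $\PP^3$, i.e. $\langle C\rangle\cong\PP^3$. The key observation is that any effective class of $(-K)$-degree $4$ on a quintic del Pezzo surface $S=X_5\cap\PP^4$ has residual class $-K_S-[C]$ of degree $1$; a class of degree $1$ and self-intersection $-1$ is a $(-1)$-curve, hence effective, so $C$ lies in an anticanonical (equivalently hyperplane) section of $S$ and therefore spans at most $\PP^3$. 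Granting $\langle C\rangle\cong\PP^3$, the linear-section curve $D:=X_5\cap\langle C\rangle$ is a Gorenstein curve of degree $5$ and arithmetic genus $1$ (it is a hyperplane, equivalently anticanonical, section of the smooth surface $S$), so that $D=C\cup\ell$ with a residual line $\ell=\ell(C)$ meeting $C$ in a length-two scheme. The assignment $C\mapsto\ell(C)$ defines the structure morphism $\Psi\colon\bH_4(X_5)\to\bH_1(X_5)\cong\PP^2$.

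Next I would construct the candidate total space as a genuine Grassmannian bundle. Let $\cW\subset\Oo_{\PP^2}^{\oplus 7}$ be the rank-two subbundle whose fiber over a line $\ell\in\PP^2$ is the two-dimensional subspace $\langle\ell\rangle\subset\CC^7$ spanning $\ell$ in $\PP^6$, and set $\cQ=\Oo_{\PP^2}^{\oplus 7}/\cW$, a vector bundle of rank $5$. A three-plane $\Pi=\PP^3\subset\PP^6$ containing $\langle\ell\rangle$ corresponds exactly to a two-dimensional subspace of the fiber $\cQ_\ell\cong\CC^5$, so the relative Grassmannian $\mathbf{G}:=\Gr(2,\cQ)$ is a Zariski-locally trivial bundle over $\PP^2$ with fiber $\Gr(2,5)\cong\Gr(3,5)$. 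Over $\mathbf{G}$ there is a tautological family of linear-section curves $X_5\cap\Pi$, each containing the corresponding line $\ell$; performing relative residuation (linkage) of $\ell$ inside this family produces a flat family of quartics of Hilbert polynomial $4m+1$, hence a morphism $\Phi\colon\mathbf{G}\to\bH_4(X_5)$, $(\ell,\Pi)\mapsto\overline{(X_5\cap\Pi)\setminus\ell}$.

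I would then prove that $\Phi$ and $\Psi$ are mutually inverse. On closed points this is immediate from the two constructions: $\Psi\circ\Phi(\ell,\Pi)=(\ell,\langle\,\overline{(X_5\cap\Pi)\setminus\ell}\,\rangle)=(\ell,\Pi)$, since the residual quartic spans $\Pi$, and $\Phi\circ\Psi(C)=\overline{(X_5\cap\langle C\rangle)\setminus\ell(C)}=C$. To upgrade this bijection to an isomorphism I would verify that $\Psi$ is a morphism in families, so that both the span $\langle C\rangle$ and the residual line vary algebraically, by showing that $h^0(\cal{I}_C(1))$ is constant (equal to $3$) along $\bH_4(X_5)$ and applying relative linkage; this exhibits $\Psi$ as a two-sided inverse of $\Phi$. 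Since $\mathbf{G}$ is smooth and irreducible, being a Grassmannian bundle over $\PP^2$, the isomorphism $\bH_4(X_5)\cong\mathbf{G}$ then yields at once that $\bH_4(X_5)$ is a smooth, irreducible $\Gr(3,5)$-bundle over $\PP^2$.

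The main obstacle is the control of $\Phi$ and $\Psi$ over all of $\bH_4(X_5)$, including its boundary. For surjectivity of $\Phi$ I must either rule out quartics that fail the generic picture --- plane or otherwise degenerate quartics, non-reduced or reducible limits, and curves meeting the residual line in a scheme of the wrong length --- or show that each still arises as a residual curve; this is precisely where the Fanelli--Gruson--Perrin classification of rational quartics and their degenerations on $X_5$ is indispensable. The second delicate point is upgrading the set-theoretic bijection to a scheme isomorphism: one must check either that $\Psi$ is everywhere a morphism (constancy of $h^0(\cal{I}_C(1))$ and flatness of the relative residuation, which could degenerate where $C$ becomes special relative to $\ell$) or, equivalently, that $\Phi$ induces an isomorphism on tangent spaces, so that $\bH_4(X_5)$ carries no embedded or superfluous components away from the locus where the residuation is transparent.
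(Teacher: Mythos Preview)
There is a basic dimensional error running through the proposal. Since $X_5$ is a threefold in $\PP^6$, the codimension-two linear section $X_5\cap\PP^4$ is already the elliptic quintic \emph{curve} $E_5$ (Section~\ref{redsec}), not a surface; and a curve $C$ with Hilbert polynomial $4m+1$ in $X_5$ spans a $\PP^4$, not a $\PP^3$ (Proposition~\ref{cmrat}). Your residuation picture $D=C\cup\ell$ with $D$ a degree-$5$ genus-$1$ curve is correct only with $\langle C\rangle=\PP^4$, and your del Pezzo surface should be $S=X_5\cap\PP^5$. Consequently the bundle $\mathbf G$ must parametrize $\PP^4$'s through a line, i.e.\ three-dimensional subspaces of $\cQ_\ell$, not two-dimensional ones; your map $\Phi(\ell,\Pi)=\overline{(X_5\cap\Pi)\setminus\ell}$ is simply undefined for $\Pi\cong\PP^3$, as $X_5\cap\PP^3$ is generically zero-dimensional. (Abstractly $\Gr(2,5)\cong\Gr(3,5)$, so the total space is unchanged, but the geometric content is not.)

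Even with the dimensions repaired, your strategy differs from the paper's. You aim to build an explicit inverse $\Psi$ and check that $\Phi,\Psi$ are mutually inverse morphisms. The paper does not do this: it constructs only the injective residuation morphism $\bar\Psi\colon\bG\to\bH_4(X_5)$ (Proposition~\ref{bij_morph}), then proves \emph{independently} that $\bH_4(X_5)$ is smooth (by computing tangent spaces at all $\CC^*$-fixed points with Macaulay2, Proposition~\ref{pro:sm}) and irreducible (by bounding every non-reduced or reducible stratum to dimension $\le 7$ and invoking Lehmann--Tanimoto for the reduced locus, Proposition~\ref{irred}); Zariski's main theorem then finishes. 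Your route would be more direct and would avoid the torus-action computation, but to make $\Psi$ a morphism on all of $\bH_4(X_5)$ you still need constancy of $h^0(\cI_C(1))$, i.e.\ that every $C$ is Cohen--Macaulay and spans exactly $\PP^4$ --- and the paper's proof of this (Proposition~\ref{cmrat}) rests on the non-existence of CM elliptic quartics in $X_5$ (Proposition~\ref{noncm4}), which your del Pezzo surface sketch does not establish.
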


This irreducibility result for $X=X_5$ contrasts with the result for $X=\PP^3$, as $\bH_4(\PP^3)$ is known to have several irreducible components \cite[Theorem 4.9]{CN12}.
Meanwhile, the irreducibility is natural in light of the authors' previous result \cite{CKK24+} on the smoothness of $\bH_4(U_{22})$ for $X=U_{22}$, as the geometry of the underlying variety $X$ becomes more complicated, the space $\bH_d(X)$ of rational curves tends to simplify.

From Prokhorov’s classification of hyperplane sections of $X_5$ \cite[Section 2]{Pro92} together with Fanelli-Gruson-Perrin's description of a Kontsevich moduli space by \cite[Section 4.1]{FGP19}, we observe that all quartic rational curves arise as quartic curves contained in a hyperplane section of $X_5$, residual to a line in the same hyperplane section. Consequently, we can generate all residual quartic curves as ideal quotients of the ideals of the linear sections by those of lines.
Applying this fact, we construct an \emph{injective} morphism from a Grassmannian $\Gr(3,5)$-bundle over $\bH_1(X_5) = \PP^2$ to the Hilbert scheme $\bH_4(X_5)$ (Proposition~\ref{bij_morph}). This morphism turns out to be an isomorphism, as we show that $\bH_4(X_5)$ is a smooth, irreducible variety of dimension $8$ (Proposition~\ref{pro:sm} and Proposition~\ref{irred}). A key ingredient in the proof of irreducibility is to use a result of Lehmann--Tanimoto \cite[Theorem 1.5]{LT19}.
They prove that there is a unique irreducible component of the space of stable maps birational onto their image. By analyzing the component of multiple curves, we conclude that the Hilbert scheme $\bH_4(X_5)$ is irreducible.


For further work, the authors expect a similar study to be conducted on higher dimensional smooth Fano varieties, such as other linear sections of the Grassmannian $\Gr(2,5)$.
Additionally, as an application of this paper, the authors expect it to serve as a foundation for computing the Donaldson--Thomas invariants (cf. \cite{CT21, CLW24}).

\subsection{Stream of the paper} This paper is organized as follows.  
In Section \ref{sec:predel}, we review the construction of $X_5$ by comparing its Plücker embedding and $\SL_2$-orbit coordinates.  
In Section \ref{sec:rat}, we summarize previous results on the Hilbert scheme $\bH_d(X_5)$ for $d\leq 3$, adding additional observations regarding the $\CC^*$-actions on $X_5$.  
In Section \ref{sec:raell}, after investigating the geometry of curves of genus $g\leq 1$ in $X_5$, we construct an injective morphism between $\bH_4(X_5)$ and a Grassmannian bundle over $\PP^2$.  
In Section \ref{sec:georatio}, we prove that the prescribed injective morphism is an isomorphism, which is our main result. 

\subsection*{Acknowledgements}
 The authors gratefully acknowledge the many helpful suggestions and comments provided by Jinwon Choi and Wanseok Lee during the preparation of this paper. The third-named author is especially grateful to Yuri Prokhorov for his valuable comments on \cite[Section 2]{Pro92}.

\section{$\SL_2$-invariant presentations of the del Pezzo variety}
\label{sec:predel}
In this section, we review the two well-known constructions of the quintic del Pezzo threefold and their relation.
\subsection{Pl\"{u}cker v.s. orbit coordinates of $X_5$}
Let $X_5$ be the quintic del Pezzo $3$-fold with $\SL_2$-invariant setting. We present $X_5$ in two ways (That is, the Pl\"{u}cker coordinates and a closure of $\SL_2$-orbits) and find its relations explicitly. For the presentation of the Pl\"{u}cker coordinates of $X_5$, let us denote the vector space $V_5=\Sym^4\CC^2=\langle e_4, e_2, e_0, e_{-2}, e_{-4}\rangle$ with weights $e_k=-k$ for $k\in \{\pm 4, \pm 2, 0\}$. There exists a unique $\SL_2$-invariant $7$-dimensional subspace in \[\wedge^2 V_5\cong \Sym^6\CC^2\oplus \Sym^2\CC^2.\]  Let us denote $p_{ij}$ by the Pl\"{u}cker coordinates of $\PP(\wedge^2V_5)$ where $p_{ij}(e_k\wedge e_l)=\delta_{ik}\delta_{jl}$ for $i <j$, $k<l$. The following were taken in \cite[Corollary 5.5.11 and Lemma 5.5.12]{CS15} (cf. \cite[Section 2.1]{San14}). The net of skew-symmetric forms in $\wedge^2 V_5^*$ is defined by
\[
3e_2^*\wedge e_0^*-e_4^*\wedge e_{-2}^*,\, 2e_2^*\wedge e_{-2}^*-e_4^*\wedge e_{-4}^*, 3e_0^*\wedge e_{-2}^*-e_{2}^*\wedge e_{-4}^*,
\]
where the operator $f\in \fsl_2(\CC)$ acts on this subspace in a standard way. Hence, the del Pezzo variety $X_5$ in $\PP(\wedge^2 V_5)=\PP^9$ is defined by the equations:
\begin{equation}\label{pluckeq}
I_{\Gr(2, V_5)/\PP^9}+\langle 3p_{2,0}-p_{4, -2}, 2p_{2,-2}-p_{4,-4}, 3p_{0,-2}-p_{2,-4} \rangle.
\end{equation}
\begin{lemma}\label{inv7dim}
The unique $\SL_2$-invariant $7$-dimensional subspace in $\wedge^2 V_5$ is generated by 
\[
e_4\wedge e_2,\,e_4\wedge e_0,\,e_2\wedge e_0+3e_4\wedge e_{-2},\, e_2\wedge e_{-2}+2e_4\wedge e_{-4},\, e_0\wedge e_{-2}+3e_2\wedge e_{-4},\,e_{0}\wedge e_{-4}, e_{-2}\wedge e_{-4}.
\]
\end{lemma}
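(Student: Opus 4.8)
The plan is to deduce the statement directly from the $\SL_2$-module structure of $\wedge^2 V_5$. The text already records the Clebsch--Gordan decomposition $\wedge^2 V_5=\wedge^2(\Sym^4\CC^2)\cong\Sym^6\CC^2\oplus\Sym^2\CC^2$ into two \emph{non-isomorphic} irreducible factors, of dimensions $7$ and $3$. Because a multiplicity-free module has, as its only submodules, the direct sums of subsets of its irreducible summands, the complete list of $\SL_2$-invariant subspaces is $0$, the two summands, and the whole space, of dimensions $0,3,7,10$. Thus there is exactly one $7$-dimensional invariant subspace, the copy of $\Sym^6\CC^2$, which settles the uniqueness clause. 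It then remains only to match this summand with the span $W$ of the seven listed vectors.

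I would carry this out by realizing $W$ as the annihilator of the net of forms, which is also what makes the coefficients $3,2,3$ transparent. The three skew forms $\omega_1,\omega_2,\omega_3\in\wedge^2 V_5^*$ displayed just before the lemma are $f$-stable --- this is precisely the standard $\fsl_2$-action noted there --- so they span the unique $3$-dimensional invariant subspace, i.e.\ the $\Sym^2$-summand of $\wedge^2 V_5^*\cong\Sym^6\CC^2\oplus\Sym^2\CC^2$. Their common kernel $W=\{\eta:\omega_1(\eta)=\omega_2(\eta)=\omega_3(\eta)=0\}$ is consequently $\SL_2$-invariant (the annihilator of an invariant subspace is invariant) of dimension $10-3=7$, hence equals the $\Sym^6$-summand by the uniqueness just proved. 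Writing a general element as $\eta=\sum c_{ij}\,e_i\wedge e_j$, the three conditions read $c_{4,-2}=3c_{2,0}$, $c_{4,-4}=2c_{2,-2}$, and $c_{2,-4}=3c_{0,-2}$; solving this linear system returns exactly the seven vectors of the statement as a basis of $W$. As a by-product this identifies $W$ with the linear space $\PP(W)=\PP^6$ that cuts out $X_5$ in \eqref{pluckeq}.

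Equivalently and more intrinsically, one can generate $W$ as a cyclic $\fsl_2$-module: the vector $e_4\wedge e_2$ spans the one-dimensional extremal weight space and is annihilated by the lowering operator, so it is an extremal weight vector of the $\Sym^6$-summand; applying the raising operator $e$ repeatedly then returns, up to nonzero scalars, $e_4\wedge e_0$, then $e_2\wedge e_0+3e_4\wedge e_{-2}$, and so on down the displayed list, and these seven weight vectors span $\Sym^6\CC^2$. The conceptual content is slight, and I expect the only genuine difficulty to be bookkeeping in this last computation: the normalization of the basis $\{e_k\}$ of $\Sym^4\CC^2$ fixes the $\fsl_2$-action constants and hence the scalars $3,2,3$, so one must keep these constants and the wedge-product signs consistent with the chosen normalization in order to reproduce the stated vectors rather than differently scaled combinations. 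The annihilator route of the previous paragraph sidesteps even this, since there the coefficients are read off directly from the net of forms.
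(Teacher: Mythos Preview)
Your proposal is correct, and your second route --- generating the subspace as a cyclic $\fsl_2$-module from an extremal weight vector --- is exactly the paper's approach: the paper starts from the highest-weight vector $e_{-2}\wedge e_{-4}$ and applies the lowering operator $f$ successively, while you start from the lowest-weight vector $e_4\wedge e_2$ and apply $e$, which is the same computation read in the opposite direction. Your first (annihilator) route is a mild variant that the paper does not spell out; it has the advantage of reading the coefficients $3,2,3$ directly from the displayed net of forms rather than from the $\fsl_2$-action constants, at the cost of invoking the invariance of annihilators.
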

\begin{proof}
Note that the operation $f$ acts on the dual space $V_5^{**}=V_5$ by
\[ f{\cdot}e_{-4}=e_{-2},\,f{\cdot}e_{-2}=2e_{0},\, f{\cdot}e_{0}=3e_{2},\, f{\cdot}e_{2}=4e_{4}.\]
By acting the operator $f$ successively to the obvious highest weight $e_{-2}\wedge e_{-4}$, we obtain the result.
\end{proof}
On the other hand, let us denote by $\bR_n=\CC[x, y]_{n}$ the vector space of degree $n$ homogeneous polynomials with respect to $x$, $y$. For an element $g(x, y)\in \bR_n$, the coefficient $a_i$ of \[g(x,y)=\displaystyle{\sum_{k=0}^{n}} a_{2k-n} \binom{n}{k}x^k y^{n-k}\] is taken as the homogeneous coordinate of $\PP^n$. Let $f(x,y)=xy(x^4-y^4)$ be a degree six homogeneous polynomial with distinct roots. It is well-known that the quintic del Pezzo variety $X_5$ is defined by the closure of the orbit of the closed point $f(x,y)$ by the action $\SL_2(\CC)$ (\cite{MU83}):
\[
X_5=\overline{\SL_2(\CC)\cdot f(x,y)}\subset \PP(\bR_6).
\]
In this case, the defining equations of $X_5$ are given by five quadric polynomials:
\begin{equation}\label{oribeq}
\begin{split}
\langle
 &a_{6}a_{-2}-4a_{4}a_{0 }+3a_{2}^2,\ 
 a_{6}a_{-4}-3a_{4}a_{-2}+2a_{2}a_{0},\ 
 a_{6}a_{-6}-9a_{2}a_{-2}+8a_{0}^2,\\
&a_{4}a_{-6}-3a_{2}a_{-4}+2a_{0}a_{-2},\ 
 a_{2}a_{-6}-4a_{0}a_{-4}+3a_{-2}^2
\rangle.
\end{split}
\end{equation}

\begin{proposition}\label{cochang}
The Pl\"{u}cker coordinates and $\SL_2$-orbit coordinates of the del Pezzo threefold $X_5$ defined in \eqref{pluckeq} and \eqref{oribeq}, respectively, are related by
\begin{gather*}
   p_{ 4, 2}= a_{ 6},
\; p_{ 4, 0}=2a_{ 4},
\; p_{ 2, 0}= a_{ 2},
\; p_{ 4,-2}=3a_{ 2},
\; p_{ 2,-2}=2a_{ 0},\\
   p_{ 4,-4}=4a_{ 0},
\; p_{ 0,-2}= a_{-2},
\; p_{ 2,-4}=3a_{-2},
\; p_{ 0,-4}=2a_{-4},
\; p_{-2,-4}= a_{-6}.
\end{gather*}
That is, there is a linear embedding of $\PP^6=\PP(\Sym^6\CC^2)$ into the space $\PP(\wedge^2V_5)$ such that the quintic del Pezzo variety defined by $\eqref{oribeq}$ corresponds to the variety defined by \eqref{pluckeq}.
\end{proposition}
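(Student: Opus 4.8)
The plan is to realize the asserted correspondence as an explicit $\SL_2$-equivariant linear embedding $\PP(\bR_6)=\PP^6\hookrightarrow\PP(\wedge^2V_5)=\PP^9$ whose image is the projectivization of the invariant subspace $W$ of Lemma~\ref{inv7dim}, and then to check that it pulls back the Plücker ideal \eqref{pluckeq} to the orbit ideal \eqref{oribeq}. First I would read off from the seven generators in Lemma~\ref{inv7dim} the three linear relations cutting out $W$ inside $\wedge^2V_5$: the generators $e_2\wedge e_0+3e_4\wedge e_{-2}$, $e_2\wedge e_{-2}+2e_4\wedge e_{-4}$, and $e_0\wedge e_{-2}+3e_2\wedge e_{-4}$ force
\[
3p_{2,0}-p_{4,-2}=2p_{2,-2}-p_{4,-4}=3p_{0,-2}-p_{2,-4}=0
\]
on $W$, which are exactly the three linear forms adjoined in \eqref{pluckeq}. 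A direct substitution shows that the proposed dictionary satisfies these identically (for instance $3p_{2,0}-p_{4,-2}=3a_2-3a_2=0$), so the map is a well-defined linear embedding $\PP^6\hookrightarrow\PP^9$ with image $\PP(W)$.

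Next I would pin down the equivariant identification $\bR_6\xrightarrow{\,\sim\,}W$. Since $\Sym^6\CC^2$ is irreducible, Schur's lemma makes this identification unique up to scalar; I would fix the individual weight-space normalizations by sending the highest weight vector of $\bR_6$ to $e_4\wedge e_2$ and applying the lowering operator $f\in\fsl_2(\CC)$ repeatedly, exactly as in the proof of Lemma~\ref{inv7dim}, while tracking the binomial factors $\binom{6}{k}$ built into the orbit coordinate $a_{2k-6}$. Matching the resulting weight vectors produces the stated relations $p_{4,2}=a_6,\ p_{4,0}=2a_4,\ \dots,\ p_{-2,-4}=a_{-6}$; the integer factors $2,3,4$ appearing there are precisely the structure constants recorded by $f\cdot e_{-4}=e_{-2}$, $f\cdot e_{-2}=2e_0$, $f\cdot e_0=3e_2$, $f\cdot e_2=4e_4$, reconciled against those binomials.

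Finally I would substitute this dictionary into the five quadratic Plücker relations generating $I_{\Gr(2,V_5)/\PP^9}$ --- the five $4\times4$ sub-Pfaffians of the skew matrix $(p_{ij})$ --- and verify that each becomes a scalar multiple of one of the five quadrics in \eqref{oribeq}; for example $p_{4,2}p_{0,-2}-p_{4,0}p_{2,-2}+p_{4,-2}p_{2,0}$ becomes $a_6a_{-2}-4a_4a_0+3a_2^2$, while the Pfaffian omitting the weight-$2$ index returns $2(a_4a_{-6}-3a_2a_{-4}+2a_0a_{-2})$, and so on. Because the three linear relations defining $W$ are already incorporated into the substitution, the restriction of \eqref{pluckeq} to $\PP(W)$ coincides with \eqref{oribeq}, so the embedding carries the orbit-model $X_5$ isomorphically onto the Plücker-model $X_5$. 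The conceptual content here is standard, so the only genuine care required is the bookkeeping in the second step: choosing the weight-space normalizations so that all five Plücker relations land on the orbit quadrics \emph{simultaneously} (up to the overall scalars $1,2,1,2,1$), rather than merely up to independent rescalings of the coordinates. This is where the binomial convention and the $\fsl_2$-structure constants must be matched precisely, but once the dictionary is written down the remaining verification is a routine computation.
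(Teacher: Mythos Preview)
Your proposal is correct and follows essentially the same approach as the paper: write down the explicit linear embedding $\PP(\bR_6)\hookrightarrow\PP(\wedge^2V_5)$ with image $\PP(W)$ (the paper records it as the assignment $w_6\mapsto e_4\wedge e_2$, $w_4\mapsto 2e_4\wedge e_0$, \dots, $w_{-6}\mapsto e_{-2}\wedge e_{-4}$, which is dual to your dictionary), and then verify by direct substitution that the Pl\"ucker ideal \eqref{pluckeq} pulls back to \eqref{oribeq}. Your write-up is more explicit than the paper's, which simply states the map and says ``one can easily check''; in particular, your explanation via Schur's lemma and the lowering operator of how the weight-space normalizations are pinned down is a helpful elaboration of what the paper abbreviates to ``cf.\ Lemma~\ref{inv7dim}''.
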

\begin{proof}
Let $w_i$ be the standard dual vector of $a_i$. Let us associate the ordinates of $\Sym^6\CC^2$ to the vector space $\wedge^2V_5$ in the following way (cf. Lemma \ref{inv7dim}):
\[\begin{split}
 w_{ 6}\mapsto e_4\wedge e_2,\ 
&w_{ 4}\mapsto 2 e_4\wedge e_0,\ 
 w_{ 2}\mapsto  e_2\wedge e_0+3e_4\wedge e_{-2},
 w_{ 0}\mapsto 2(e_2\wedge e_{-2}+2e_4\wedge e_{-4}),\\ 
&w_{-2}\mapsto e_0\wedge e_{-2}+3e_2\wedge e_{-4},\ 
 w_{-4}\mapsto 2e_0\wedge e_{-4},\ 
 w_{-6}\mapsto e_{-2}\wedge e_{-4}.
\end{split}\]
Then one can easily check that the defining equation \eqref{pluckeq} has been  changed into $\eqref{oribeq}$.
\end{proof}
\section{Rational curves of degree $\leq 3$ in $X_5$}\label{sec:rat}

In this section, we present the $\CC^*$-fixed rational curves of degree $d\leq 3$ in $X_5$ by using several Schubert varieties of $\Gr(2, V_5)$. From now on, let us fix the codimension three linear subspace $P$, which is defined by the linear forms in \eqref{pluckeq}. That is, $X_5=\Gr(2, V_5)\cap P$.

\subsection{A torus action on Grassmannian}
Since the following easy lemma will be used several times, we put the details of the proof here for the convenience of readers. For a vector space $V$, a vector $w\in \wedge^2V$ is called \emph{decomposable} if $w=v_1\wedge v_2$ for some $v_1$, $v_2\in V$ (equivalently, $\PP(w)\in \Gr(2, V)$). A vector $w$ is decomposable if and only if $w\wedge w=0\in \wedge^4V$.
\begin{lemma}\label{flocp}
Let $V$ be the finite-dimensional vector space with the diagonal $\CC^*$-action of distinct weights. Then the fixed loci of the Grassmannian variety $\Gr(2, V)$ are isolated whenever each weight occurs with multiplicity at most two among the Pl\"{u}cker coordinates in $\PP(\wedge^2V)$.
\end{lemma}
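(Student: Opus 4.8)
The plan is to describe the $\CC^*$-fixed locus first in the ambient projective space $\PP(\wedge^2V)$ and then intersect it with $\Gr(2, V)$. First I would fix an eigenbasis $e_1, \dots, e_n$ of $V$ with distinct weights $\mu_1, \dots, \mu_n$, so that $\{e_i\wedge e_j\}_{i<j}$ is an eigenbasis of $\wedge^2V$ with the weight of $e_i\wedge e_j$ equal to $\mu_i+\mu_j$. Decomposing $\wedge^2 V=\bigoplus_\lambda (\wedge^2V)_\lambda$ into weight spaces, a point $[w]$ is $\CC^*$-fixed exactly when $w$ is an eigenvector, so the fixed locus of the induced action on $\PP(\wedge^2V)$ is the disjoint union of the linear subspaces $\PP((\wedge^2V)_\lambda)$. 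Since $\Gr(2, V)$ is $\CC^*$-invariant, its fixed locus is $\bigsqcup_\lambda \big(\Gr(2, V)\cap \PP((\wedge^2V)_\lambda)\big)$, the set of decomposable vectors in each weight space. The hypothesis that each weight value $\mu_i+\mu_j$ is repeated at most twice is precisely the statement that $\dim(\wedge^2V)_\lambda \leq 2$ for every $\lambda$, so it suffices to treat the two cases $\dim(\wedge^2V)_\lambda = 1$ and $\dim(\wedge^2V)_\lambda = 2$.

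When $\dim(\wedge^2V)_\lambda = 1$, the space $\PP((\wedge^2V)_\lambda)$ is the single point $[e_i\wedge e_j]$, which is decomposable and hence an isolated fixed point lying on $\Gr(2, V)$. The substantive case is $\dim(\wedge^2V)_\lambda = 2$, where $(\wedge^2V)_\lambda=\langle e_i\wedge e_j,\, e_k\wedge e_l\rangle$ with $\{i,j\}\neq \{k,l\}$ and $\mu_i+\mu_j=\mu_k+\mu_l=\lambda$. Here the key observation is that the two index pairs must be \emph{disjoint}: if they shared an index, cancelling it from the equality $\mu_i+\mu_j=\mu_k+\mu_l$ would force the two remaining weights to coincide, contradicting the distinctness of the $\mu$'s and the assumption $\{i,j\}\neq\{k,l\}$.

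With all four indices distinct, I would apply the decomposability criterion $w\wedge w=0$ to a general element $w = a\,e_i\wedge e_j + b\,e_k\wedge e_l$ of the pencil $\PP((\wedge^2V)_\lambda)\cong \PP^1$. A direct expansion gives $w\wedge w = 2ab\,(e_i\wedge e_j\wedge e_k\wedge e_l)$, and the four-fold wedge is nonzero precisely because the four indices are distinct; hence $w$ is decomposable if and only if $ab=0$. Therefore $\Gr(2, V)\cap \PP((\wedge^2V)_\lambda)$ consists of exactly the two coordinate points $[e_i\wedge e_j]$ and $[e_k\wedge e_l]$. Assembling the two cases shows that every connected component of the fixed locus of $\Gr(2, V)$ is a single reduced point, which is the desired isolatedness. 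The only genuine content is the disjointness of the index pairs, used to discard the degenerate configuration, together with the elementary pencil computation, so I anticipate no serious obstacle beyond bookkeeping.
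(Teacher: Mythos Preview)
Your proof is correct and follows essentially the same approach as the paper: decompose the fixed locus of $\PP(\wedge^2V)$ into weight spaces of dimension at most two, and in the two-dimensional case use the criterion $w\wedge w=0$ to see that only the two coordinate points are decomposable. Your argument is in fact slightly more complete, since you explicitly justify why the two index pairs $\{i,j\}$ and $\{k,l\}$ must be disjoint (via cancellation and distinctness of the $\mu_i$), a point the paper simply asserts.
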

\begin{proof}
By the assumption, the fixed loci in $\PP(\wedge^2 V)$ are coordinate points when the weight in $\wedge^2 V$ is unique. If the weight of two vectors $e_i\wedge e_{j}$ and $e_k\wedge e_{l}$, $\{i, j\}\cap \{k, l\}=\emptyset$ is the same one, then any vector $ue_i\wedge e_{j}+ve_k\wedge e_{l}$, $(u, v)\in \CC^2$ is a $\CC^*$-fixed one. However, the line $\PP(ue_i\wedge e_{j}+ve_k\wedge e_{l})$ intersects with $\Gr(2, V)$ at two points $\PP(e_i\wedge e_{j})$, $\PP(e_k\wedge e_{l})$ because $w=ue_i\wedge e_{j}+ve_k\wedge e_{l}$ is a decomposable vector if and only if $u=0$ or $v=0$, and thus the fixed loci are isolated.
\end{proof}

\subsection{Fixed lines in $X_5$}\label{sub:fixl}
For an embedding of a smooth projective variety $X\subset \PP^r$, let $\bH_d(X)$ be the Hilbert scheme parametrizing curves $C$ in $X$ with Hilbert polynomial 
\begin{equation*}
 \chi(\cO_C(m))=dm+1.
\end{equation*}
It is known that the Hilbert scheme of lines in $\Gr(2, V_5)$ is isomorphic to $\bH_1(\Gr(2, V_5))\cong \Gr(1, 3, V_5)$ by \cite[Exercise 6.9]{Har95}. A line in $\Gr(2, V_5)$ is determined by a pencil of lines passing through a point (so-called \emph{vertex}). Also, under the natural embedding $\bH_1(X_5)\subset \bH_1(\Gr(2, V_5))$, we have a $\CC^*$-equivariant projection map
\[
p:\bH_1(X_5)\subset\bH_1(\Gr(2, V_5))\cong \Gr(1, 3, V_5)\lr \Gr(1, V_5)
\]
which associates a line to its \emph{vertex}. Clearly, the fixed locus of $\Gr(1, V_5)$ is just the coordinate points. For each point $\PP(e_i)$, $i\in \{\pm 4, \pm 2, 0\}$, the fixed point of the fiber $p^{-1}(\PP(e_i))$ determines the fixed loci of $\bH_1(X_5)$. However, the fiber of the map $p$ is isomorphic to $\Gr(2, V_5/\langle e_i\rangle)$. Note that the weights of the quotient space $V_5/\langle e_i\rangle$ are distinct from each other. Thus by Lemma \ref{flocp}, the $\CC^*$-fixed lines in $\Gr(2, V_5)$ are of the form:
\[
\{e_i\wedge(ue_s+ ve_t)\}, [u:v]\in \PP^1
\]
for $s, t\in \{\pm 4, \pm 2, 0\}\setminus \{i\}$. By plugging the above lines into the defining relation \eqref{pluckeq} of the del Pezzo variety, we can check that there exist exactly three fixed lines in $X_5$ as follows.
\begin{equation}\label{fixedlines}
   e_4\wedge(ue_2+ve_0), e_2\wedge(ue_0+ve_{-2}), e_0\wedge(ue_{-2}+ve_{-4}), [u:v]\in \PP^1.
\end{equation}
\begin{remark}
Under $\SL_2$-orbit coordinates of the del Pezzo variety $X_5$, lines in $X_5$ are given by (\cite[Lemma 5.1.4]{KPS18})
\[
L_{uv}=\{uv(su^4-tv^4)\}_{[s:t]\in \PP^1},\; u, v\in \PP(\bR_1), u\neq v
\]
or
\[
L_{u^2}=\{u^5(sx+ty)\}_{[s:t]\in \PP^1},\; u\in \PP(\bR_1).
\]
Hence, the fixed lines are $L_{xy}, L_{x^2}, L_{y^2}$, which match that of \eqref{fixedlines}.
\end{remark}

\subsection{Fixed conics in $X_5$}\label{fxcon}
The geometry of the Hilbert scheme of conics in the Grassmannian variety $\Gr(2, V_n)$, $\dim V_n=n$ has been intensively studied in \cite{HT15, CHL18, CM17}. The key observation is to consider the linear spanning of a conic in two different ways: The linear spanning of a conic in the Pl\"{u}cker embedding $\PP(\wedge^2 V_n)$ and the spanning of the moduli points of a conic in $\PP(V_n)$. For the del Pezzo threefold $X_5$, by considering the spanning as the moduli meaning, there exists an isomorphism
\[
\bH_2(X_5)\cong \Gr(4, V_5),
\]
where for a subvector space $V_4<V_5$, the conic is given by the intersection 
\begin{equation*}
 \Gr(2, V_4)\cap P\subset \Gr(2, V_5)\cap P=X_5.
\end{equation*}
For an explicit description of the above isomorphism, see \cite[Proposition 4.3]{CHL18}. Hence, the $\CC^*$-fixed conic is represented by the five vector spaces
\begin{gather*}
W_{ 4}=\{e_2, e_0, e_{-2}, e_{-4}\},\ 
W_{ 2}=\{e_4, e_0, e_{-2}, e_{-4}\},\\
W_{ 0}=\{e_4, e_2, e_{-2}, e_{-4}\},\ 
W_{-2}=\{e_4, e_2, e_0, e_{-4}\},\ 
W_{-4}=\{e_4, e_2, e_0, e_{-2}\}.
\end{gather*}
If $V_4=W_4$, the defining equation of the conic in $X_5$ is
\[
I_{\Gr(2, W_4)\cap P}=I_{X_5/\PP^9}+\langle p_{4, j}\ |\ j\in \{2, 0, -2, -4\}\rangle.
\]
In terms of $\SL_2$-orbit coordinates, the conic is defined by
\[
\langle a_{6}, a_{4}, a_{2}, a_{0}, a_{-2}^2\rangle.
\]
The other four cases are given by the following ones.
\begin{gather*}
\langle a_{6}, a_{2}, a_{0}, a_{-2}, a_{4}a_{-6}\rangle,\ 
\langle a_{4}, a_{2}, 8a_{0}^2+a_{6}a_{-6}, a_{-2}, a_{-4}\rangle,\\
\langle a_{6}a_{-4}, a_{2}, a_{0}, a_{-2}, a_{-6}\rangle,\ 
\langle a_{2}^2, a_{0}, a_{-2}, a_{-4}, a_{-6}\rangle.
\end{gather*}

\subsection{Fixed twisted cubics in $X_5$}\label{sub:fixc}

In \cite[Proposition 2.46 and Remark 2.47]{San14}, the author proves that the Hilbert scheme of twisted cubic curves is isomorphic to \[\bH_3(X_5)\cong \Gr(2, V_5).\]Furthermore, he described the universal resolution of twisted cubics in $X_5$. By using a Schubert variety, the above isomorphism can be described by finding the defining equation of the twisted cubics (cf. \cite[Proposition 3.3]{Chu22}). In detail, let $\sigma_{1}(l)$ be the Schubert variety of lines meeting a fixed line $l$ in $\PP(V_5)$. The variety $\sigma_{1}(l)$ is a degree three, four-dimensional variety in $\PP(\wedge^2V_5)$. Hence, by cutting out by the codimension $3$ hyperplane $P$, we obtain a twisted cubic curve in $X_5$. 

Since the weights of $V_5$ are different (Lemma \ref{flocp}), there are ten $\CC^*$-fixed twisted cubics in $X_5$. For instance, let $l=\langle e_4, e_2\rangle$ be the line in $\PP(V_5)$. Then the lines meeting with the line $l$ would be generically described by the decomposable vectors
\begin{equation}\label{suber1}
(e_4+t_0 e_2)\wedge (e_2+t_1e_0+t_2e_{-2}+t_3 e_{-4})
\end{equation}
for $t_i\in \CC^*$ in $\PP(\wedge^2V_5)$. Therefore, the variety $\sigma_{1}(l)$ is the closure of the image of the map $(\CC^*)^4\lr \PP(\wedge^2V_5)$ given by equation \eqref{suber1}. That is,
\[
I_{\sigma_{1}(l)}=\langle \det \begin{bmatrix}p_{4,0}&p_{4,-2}&p_{4,-4}\\p_{2,0}&p_{2,-2}&p_{2,-4}\end{bmatrix}, p_{i,j} \rangle, \;i, j\in \{0, -2, -4\}.
\]
After cutting out by the codimension three linear subspace $P$ in \eqref{pluckeq}, we obtain the $\CC^*$-fixed twisted cubic $C_l$ in $X_5$, which presents the point $[l]$. In terms of orbit coordinates, the defining equation of $C_l$ is given by
\[
  I_{C_l/\PP^6}= \langle a_{-6}, a_{-4}, a_{-2}, a_0^2, a_2a_0, 3a_2^2-4a_4a_0 \rangle.\\
\]

In the same manner, we have nine other $\CC^*$-fixed twisted cubics in $X_5$ in Table \ref{table_C3} and their configurations are presented in Figure \ref{Contw}. We remark that the multiple structure of a twisted cubic curve of a given reduced curve is unique in each case (\cite[Lemma 3.2 and Lemma 3.5]{Chu22}).

\begin{table}[h]
   \setlength{\tabcolsep}{4pt}
   \centering
   \caption{$\mathbb{C}^{\ast}$-fixed twisted cubics}
   \label{table_C3}
   \begin{tabular}{|c|c|c|c|} 
    \hline
    \rule{0pt}{2.6ex} 
    \emph{No.} & $l$ & $I_{C_l/\PP^6}$ &  \\ [0.5ex] 
    \hline
    &  &  & \\ [-1.5ex]
    \emph{1} & $\langle e_2, e_{-2} \rangle$ & $\langle a_{-4},a_0,a_4,3a_{-2}^2+a_2a_{-6},9a_2a_{-2}-a_6a_{-6},3a^2_2+a_6a_{-2} \rangle$ & $C_3$ \\[1.5ex]
    \emph{2} & $\langle e_2, e_{-4} \rangle$ & $\langle a_{-2},a_2,a_4,a_0a_{-4},a_6a_{-4},8a^2_0+a_6a_{-6} \rangle$ & $l_2+C_2$ \\[1.5ex]
    \emph{3} & $\langle e_4, e_{-2} \rangle$ & $\langle a_{-4},a_{-2},a_2,a_4a_{-6},a_4a_0,8a^2_0+a_6a_{-6} \rangle$ & $l_1+C_2$ \\[1.5ex]
    \emph{4} & $\langle e_4, e_{-4} \rangle$ & $\langle a_{-2},a_0,a_2,a_4a_{-6},a_6a_{-6},a_6a_{-4} \rangle$ & $l_0+l_1+l_2$ \\[1.5ex]
    \emph{5} & $\langle e_0, e_{-4} \rangle$ & $\langle a_0,a_2,a_6,a_4a_{-6},a_4a_{-2},a^2_{-2} \rangle$ & $l_0+2l_2$ \\[1.5ex]
    \emph{6} & $\langle e_4, e_0 \rangle$ & $\langle a_{-6},a_{-2},a_0,a_2a_{-4},a_6a_{-4},a^2_2 \rangle$ & $l_0+2l_1$ \\[1.5ex]
    \emph{7} & $\langle e_0, e_{-2} \rangle$ & $\langle a_{-2},a_0,a_6,a_2a_{-6},a^2_2,3a_2a_{-4}-a_4a_{-6} \rangle$ & $2l_0+l_2$ \\[1.5ex]
    \emph{8} & $\langle e_2, e_0 \rangle$ & $\langle a_{-6},a_0,a_2,a^2_{-2},a_6a_{-2},3a_4a_{-2}-a_6a_{-4} \rangle$  & $2l_0+l_1$ \\[1.5ex]
    \emph{9} & $\langle e_{-2}, e_{-4} \rangle$ & $\langle a_2,a_4,a_6,a_0a_{-2},a^2_0,3a_{-2}^2-4a_0a_{-4} \rangle$ & $3l_2$ \\[1.5ex]
    \emph{10} & $\langle e_4, e_2 \rangle$ & $\langle a_{-6}, a_{-4}, a_{-2}, a_0^2, a_2a_0, 3a_2^2-4a_4a_0 \rangle$ & $3l_1$ \\[1.5ex]
    \hline
 \end{tabular}  
 \end{table} 
 


\begin{figure}[ht]
\caption{Configuration of $\CC^*$-fixed twisted cubic curves}
\vspace{2ex}
\label{Contw}
\begin{tikzpicture}[scale=0.83]
\begin{scope}[xshift=0cm,yshift=0cm]
\draw[color=lightgray] (0.0,0.0)--(0.5,3.0);
\draw[color=lightgray] (3.0,0.0)--(2.5,3.0);
\draw[color=lightgray] (-0.3,0.5)--(3.3,0.5);
\draw[color=lightgray] (1.5,2.5) ellipse (1.07 and 0.5);

\draw[thick] (-0.2,3.0) .. controls (0.5,2.1) and (2.5,2.1) .. (3.2,3.0);

\node at (1.5,-0.2) {\footnotesize 1. $C_3$};
\end{scope}

\begin{scope}[xshift=0cm,yshift=-4cm]
\draw[color=lightgray] (0.0,0.0)--(0.5,3.0);
\draw[color=lightgray] (-0.3,0.5)--(3.3,0.5);
\draw[color=lightgray] (-0.2,3.0) .. controls (0.5,2.1) and (2.5,2.1) .. (3.2,3.0);

\draw[thick] (3.0,0.0)--(2.5,3.0);
\draw[thick] (1.5,2.5) ellipse (1.07 and 0.5);

\node at (1.5,-0.2) {\footnotesize 2. $l_2+C_2$};
\end{scope}

\begin{scope}[xshift=0cm,yshift=-8cm]
\draw[color=lightgray] (3.0,0.0)--(2.5,3.0);
\draw[color=lightgray] (-0.3,0.5)--(3.3,0.5);
\draw[color=lightgray] (-0.2,3.0) .. controls (0.5,2.1) and (2.5,2.1) .. (3.2,3.0);

\draw[thick] (0.0,0.0)--(0.5,3.0);
\draw[thick] (1.5,2.5) ellipse (1.07 and 0.5);

\node at (1.5,-0.2) {\footnotesize 3. $l_1+C_2$};
\end{scope}

\begin{scope}[xshift=4cm,yshift=0cm]
\draw[color=lightgray] (1.5,2.5) ellipse (1.07 and 0.5);
\draw[color=lightgray] (-0.2,3.0) .. controls (0.5,2.1) and (2.5,2.1) .. (3.2,3.0);

\draw[thick] (0.0,0.0)--(0.5,3.0);
\draw[thick] (3.0,0.0)--(2.5,3.0);
\draw[thick] (-0.3,0.5)--(3.3,0.5);

\node at (1.5,-0.2) {\footnotesize 4. $l_0+l_1+l_2$};
\end{scope}

\begin{scope}[xshift=4cm,yshift=-4cm]
\draw[color=lightgray] (0.0,0.0)--(0.5,3.0);
\draw[color=lightgray] (1.5,2.5) ellipse (1.07 and 0.5);
\draw[color=lightgray] (-0.2,3.0) .. controls (0.5,2.1) and (2.5,2.1) .. (3.2,3.0);

\draw[thick] (3.0,0.0)--(2.5,3.0);
\draw[thick] (3.0+0.06,0.0)--(2.5+0.06,3.0);
\draw[thick] (-0.3,0.5)--(3.3,0.5);

\node at (1.5,-0.2) {\footnotesize 5. $l_0+2l_2$};
\end{scope}

\begin{scope}[xshift=4cm,yshift=-8cm]
\draw[color=lightgray] (3.0,0.0)--(2.5,3.0);
\draw[color=lightgray] (1.5,2.5) ellipse (1.07 and 0.5);
\draw[color=lightgray] (-0.2,3.0) .. controls (0.5,2.1) and (2.5,2.1) .. (3.2,3.0);

\draw[thick] (0.0,0.0)--(0.5,3.0);
\draw[thick] (0.0-0.06,0.0)--(0.5-0.06,3.0);
\draw[thick] (-0.3,0.5)--(3.3,0.5);

\node at (1.5,-0.2) {\footnotesize 6. $l_0+2l_1$};
\end{scope}

\begin{scope}[xshift=8cm,yshift=-4cm]
\draw[color=lightgray] (0.0,0.0)--(0.5,3.0);
\draw[color=lightgray] (1.5,2.5) ellipse (1.07 and 0.5);
\draw[color=lightgray] (-0.2,3.0) .. controls (0.5,2.1) and (2.5,2.1) .. (3.2,3.0);

\draw[thick] (3.0,0.0)--(2.5,3.0);
\draw[thick] (-0.3,0.5+0.03)--(3.3,0.5+0.03);
\draw[thick] (-0.3,0.5-0.03)--(3.3,0.5-0.03);

\node at (1.5,-0.2) {\footnotesize 7. $2l_0+l_2$};
\end{scope}

\begin{scope}[xshift=8cm,yshift=-8cm]
\draw[color=lightgray] (3.0,0.0)--(2.5,3.0);
\draw[color=lightgray] (1.5,2.5) ellipse (1.07 and 0.5);
\draw[color=lightgray] (-0.2,3.0) .. controls (0.5,2.1) and (2.5,2.1) .. (3.2,3.0);

\draw[thick] (0.0,0.0)--(0.5,3.0);
\draw[thick] (-0.3,0.5+0.03)--(4.3-1,0.5+0.03);
\draw[thick] (-0.3,0.5-0.03)--(4.3-1,0.5-0.03);

\node at (1.5,-0.2) {\footnotesize 8. $2l_0+l_1$};
\end{scope}

\begin{scope}[xshift=12cm,yshift=-4cm]
\draw[color=lightgray] (0.0,0.0)--(0.5,3.0);
\draw[color=lightgray] (-0.3,0.5)--(3.3,0.5);
\draw[color=lightgray] (1.5,2.5) ellipse (1.07 and 0.5);
\draw[color=lightgray] (-0.2,3.0) .. controls (0.5,2.1) and (2.5,2.1) .. (3.2,3.0);

\draw[thick] (3.0,0.0)--(2.5,3.0);
\draw[thick] (3.0-0.06,0.0)--(2.5-0.06,3.0);
\draw[thick] (3.0+0.06,0.0)--(2.5+0.06,3.0);

\node at (1.5,-0.2) {\footnotesize 9. $3l_2$};
\end{scope}

\begin{scope}[xshift=12cm,yshift=-8cm]
\draw[color=lightgray] (3.0,0.0)--(2.5,3.0);
\draw[color=lightgray] (-0.3,0.5)--(3.3,0.5);
\draw[color=lightgray] (1.5,2.5) ellipse (1.07 and 0.5);
\draw[color=lightgray] (-0.2,3.0) .. controls (0.5,2.1) and (2.5,2.1) .. (3.2,3.0);

\draw[thick] (0.0,0.0)--(0.5,3.0);
\draw[thick] (0.0-0.06,0.0)--(0.5-0.06,3.0);
\draw[thick] (0.0+0.06,0.0)--(0.5+0.06,3.0);

\node at (1.5,-0.2) {\footnotesize 10. $3l_1$};
\end{scope}

\end{tikzpicture}
\end{figure}
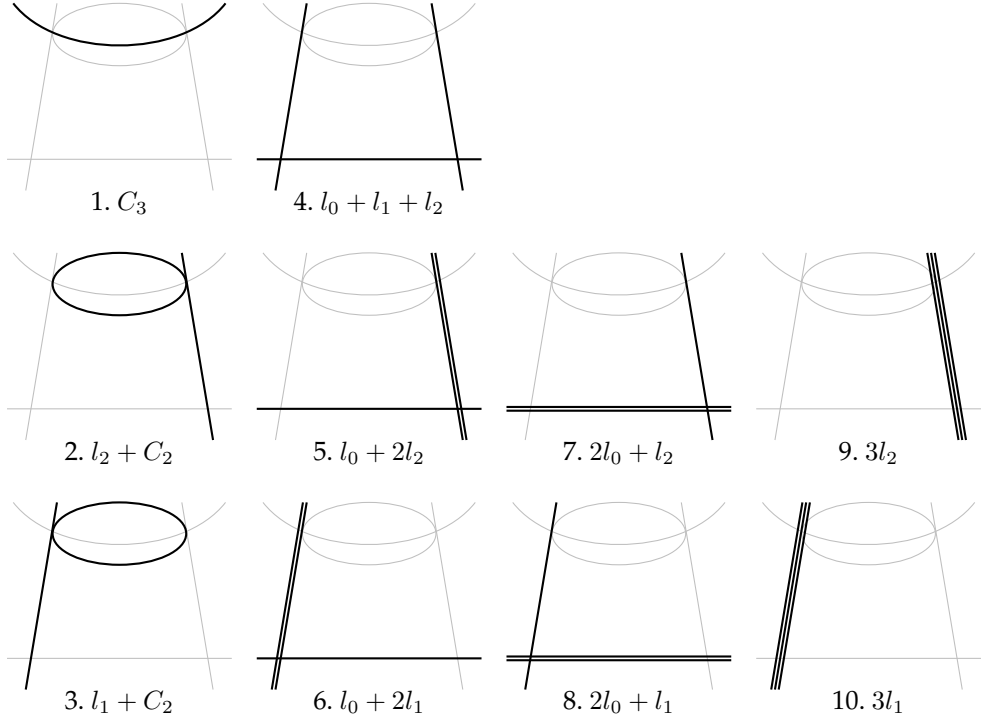


\section{Rational and Elliptic quartic curves in $X_5$}\label{sec:raell}
As mentioned in the proof of \cite[Proposition 4.11]{FGP19}, for general $\PP^4\subset \PP^6=\PP(\Sym^6\CC^2)$, the (scheme-theoretic) intersection part $X_5\cap \PP^4=E_5$ is an elliptic quintic curve in $X_5$ by adjunction formula. However, if we choose every general linear subspace $\PP^4$ containing a line $L$ in $X_5$, then the intersection curve $E_5$ is a union of $L$ and a rational quartic curve $C_4$. Note that an irreducible rational quartic curve $C_4$ has a unique secant line in $X_5$ (\cite[Lemma 4.14]{FGP19}). In this section, we will describe the Hilbert scheme of rational quartic curves in $X_5$ by extending this geometric process.
\subsection{Quartic curves via residual curve}\label{redsec}
Let us start with a simple observation of the hyperplane section of quintic del Pezzo varieties.
\begin{lemma}[\protect{\cite[Section 2]{Pro92}}]\label{hyerxq}
The (scheme-theoretical) hyperplane section $X_5\cap H$ of any hyperplane $H$ in $\PP^6$ is a non-degenerate, irreducible (possibly, singular) quintic del Pezzo surface.
\end{lemma}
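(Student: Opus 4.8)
The plan is to verify the three asserted properties of $S := X_5\cap H$ --- that it is reduced and irreducible, that it is non-degenerate in $H\cong\PP^5$, and that it is a del Pezzo surface of degree $5$ --- using only the standard invariants of $X_5$ as a Fano threefold of index two: namely $\Pic(X_5)=\ZZ\cdot\Oo_{X_5}(1)$ with $-K_{X_5}\cong\Oo_{X_5}(2)$, the linear normality of $X_5\subset\PP^6$ (so that $h^0(\Oo_{X_5}(1))=7$), and the Fano vanishing $H^i(\Oo_{X_5})=0$ for $i>0$. The del Pezzo property is then purely formal: since $X_5$ is smooth, $S$ is a Cartier divisor in $|\Oo_{X_5}(1)|$, and adjunction gives
\[
\omega_S \cong \bigl(\omega_{X_5}\otimes\Oo_{X_5}(S)\bigr)\big|_S \cong \Oo_{X_5}(-1)\big|_S \cong \Oo_S(-1),
\]
so that $-K_S\cong\Oo_S(1)$ is very ample and $(-K_S)^2=\deg S=\deg X_5=5$; thus $S$ is a Gorenstein quintic del Pezzo surface, the phrase ``possibly singular'' merely recording that $S$ need not be smooth for special $H$.

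For irreducibility and reducedness I would exploit that $\Oo_{X_5}(1)$ generates $\Pic(X_5)$ and is ample. Writing the divisor $S\in|\Oo_{X_5}(1)|$ as a sum $S=\sum_i m_i S_i$ over its reduced irreducible components, each prime divisor has class $\Oo_{X_5}(d_i)$ with $d_i\ge 1$ (an effective nonzero divisor has positive intersection with the ample class $\Oo_{X_5}(1)^2$) and each $m_i\ge 1$. Comparing classes yields $\sum_i m_i d_i=1$, which forces a single component with $m_1=d_1=1$; hence the cycle of $S$ is reduced and irreducible. Since $S$ is a Cartier divisor in the smooth, hence Cohen--Macaulay, threefold $X_5$, it is itself Cohen--Macaulay, so it has no embedded points and is pure of dimension two; being generically reduced and satisfying $S_1$, it is reduced, and therefore integral.

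For non-degeneracy I would compute $h^0(\Oo_S(1))$ from the restriction sequence. Twisting $0\to\Oo_{X_5}(-1)\to\Oo_{X_5}\to\Oo_S\to 0$ by $\Oo_{X_5}(1)$ and using $H^1(\Oo_{X_5})=0$ gives
\[
0\to H^0(\Oo_{X_5})\to H^0(\Oo_{X_5}(1))\to H^0(\Oo_S(1))\to 0,
\]
so $h^0(\Oo_S(1))=7-1=6$. By linear normality the restriction $H^0(\PP^6,\Oo(1))\to H^0(\Oo_{X_5}(1))$ is an isomorphism, and the sequence above identifies $H^0(\Oo_S(1))$ with $H^0(\PP^6,\Oo(1))/\langle h\rangle$, where $h$ is the linear form cutting out $H$. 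As restriction of linear forms from $\PP^6$ to $H=\PP^5$ also has kernel exactly $\langle h\rangle$, the induced map $H^0(\PP^5,\Oo(1))\to H^0(\Oo_S(1))$ is injective; equivalently, no hyperplane of $H$ contains $S$, so $S$ is non-degenerate.

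The one genuinely global point --- and the only place where a naive Bertini argument would not suffice --- is the irreducibility, since a priori a special (for instance, tangent) hyperplane section of a threefold could break into several components. Here the Picard-rank-one argument settles every $H$ simultaneously, so I do not expect a real obstacle; the remaining work lies only in identifying the precise singularities that occur for degenerate $H$, which (being at worst Du Val by Gorenstein-ness) goes beyond what the statement requires.
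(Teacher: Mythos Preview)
Your proof is correct and, for irreducibility, uses exactly the same idea as the paper: the hyperplane class generates $\Pic(X_5)$, so the section cannot split as a sum of effective divisors. The paper phrases this as ``$[F]$ is primitive in $\Pic(X_5)$'' and stops there; you go further and carefully derive reducedness from the Cohen--Macaulay property plus generic reducedness, which the paper leaves implicit.

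The main difference lies in non-degeneracy. The paper simply cites \cite[Lemma~8.1]{BL13}, a general statement about linear sections of non-degenerate varieties, whereas you give a direct cohomological computation using the restriction sequence and the Kodaira-type vanishing $H^1(\Oo_{X_5})=0$. Your argument is more self-contained and makes explicit why the restriction map on linear forms is injective. You also spell out the adjunction computation showing $-K_S\cong\Oo_S(1)$ is very ample of degree~$5$, which the paper absorbs into the citation of \cite{Pro92} in the lemma's attribution. Overall your route is the same in spirit but more elementary in execution, trading a black-box citation for a short cohomology argument.
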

\begin{proof}
Since $X_5$ is non-degenerate, the intersection $F=X_5 \cap H$ is a surface.
 Note that $[F]$ is primitive in $\Pic(X_5)$.
Because $X_5$ is smooth, an irreducible component $F_1$ of $F$ becomes an element of $\Pic(X_5)$, and hence, $[F_1]=[F]$.
That is, $X_5 \cap H$ is irreducible.
The non-degeneracy of $X_5 \cap H$ follows from Lemma 8.1 in \cite{BL13}.
\end{proof}

Let us describe the construction of a rational quartic curve using a sheaf-theoretical method. For a line $L\subset X_5=X$, let $s_1$, $s_2\in \rH^0(I_{L/X_5}(1))$ be the sub-linear section of $\rH^0(\cO_{X}(1))$ such that $s_1\wedge s_2 \neq 0$. Then it gives rise to an exact sequence
\begin{equation}\label{rese5}
0\lr \cO_{X_5}(-2)\stackrel{\psi=(s_1, s_2)}{\longrightarrow} \cO_{X_5}(-1)\oplus \cO_{X_5}(-1)\lr \coker(\psi)\lr 0.
\end{equation}
The scheme theoretic intersection $E_5=X_5\cap V(s_1)\cap V(s_2)$ in the irreducible surface $X_5\cap V(s_1)$ is a curve by Lemma \ref{hyerxq}. Therefore, by computing Hilbert polynomials of each term in \eqref{rese5}, we know that $\coker(\psi)\cong I_{E_5/X_5}$, $\chi(\cO_{E_5}(m))=5m$. The curve $E_5$ is connected by \cite{FH79} and clearly contains the line $L$ by its construction.
\begin{proposition}\label{idquot}
Under the above assumption, for any elliptic quintic curve $E_5$ in $X_5$ given a codimension two linear section of $X_5$ containing a line $L$, we have
\begin{equation}\label{gensred}
\Hom_{X_5} (\cO_L(-2), \cO_{E_5})\cong \CC\langle \phi\rangle.
\end{equation}
Furthermore, 
\begin{enumerate}
\item the non-zero map
\begin{equation}\label{phmap}
\phi: \cO_L(-2)\lr \cO_{E_5}
\end{equation}
is injective and thus the cokernel of $\phi$ is isomorphic to $\text{coker}(\phi)=\cO_{C_4}$ for a curve $C_4$ with Hilbert polynomial $\chi(\cO_{C_4}(m))=4m+1$ in $X_5$.
\item The image of the map $\phi$ in \eqref{phmap} is isomorphic to $\text{im}(\phi)\cong \cO_L(-1)$ if and only if $C_4$ is a non Cohen–Macaulay (CM)-curve.
\end{enumerate}
\end{proposition}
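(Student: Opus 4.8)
The plan is to reduce everything to Serre duality on the curve $E_5$. Since $E_5$ is cut out of $X_5$ by two members of $|\cO_{X_5}(1)|$ (Lemma~\ref{hyerxq} forces these to form a regular sequence, so that $E_5$ is a codimension-two complete intersection, hence Cohen--Macaulay), adjunction together with $\omega_{X_5}\cong\cO_{X_5}(-2)$ gives $\omega_{E_5}\cong\cO_{X_5}(-2+1+1)|_{E_5}\cong\cO_{E_5}$; in particular $E_5$ is Gorenstein with trivial dualizing sheaf. Serre duality on $E_5$ then yields
\[
\Hom_{X_5}(\cO_L(-2),\cO_{E_5})\cong\Hom_{E_5}(\cO_L(-2),\omega_{E_5})\cong H^1(E_5,\cO_L(-2))^\vee,
\]
and since $\cO_L(-2)$ is supported on $L\cong\PP^1$ this is $H^1(\PP^1,\cO_{\PP^1}(-2))^\vee\cong\CC$, which is exactly \eqref{gensred}.

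For part (1) I would first show that the generator $\phi$ is forced to be injective. Its kernel is a subsheaf of the line bundle $\cO_L(-2)$ on the integral curve $L$; were it nonzero, $\im(\phi)$ would be a finite-length subsheaf of $\cO_{E_5}$, which is impossible because the Cohen--Macaulay curve $E_5$ has no embedded or isolated associated points. Hence $\phi$ is injective, so $\coker(\phi)$ is a quotient of $\cO_{E_5}$, i.e.\ $\coker(\phi)\cong\cO_{C_4}$ for a closed subscheme $C_4\subset E_5$, and additivity of Euler characteristics in \eqref{phmap} gives $\chi(\cO_{C_4}(m))=\chi(\cO_{E_5}(m))-\chi(\cO_{\PP^1}(m-2))=5m-(m-1)=4m+1$.

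To set up part (2) I would identify the image explicitly. Because a map $\cO_L\to\cO_{E_5}$ is the same datum as an element of $\cO_{E_5}$ annihilated by $I_{L/E_5}$, one has $\cHom_{E_5}(\cO_L,\cO_{E_5})=\operatorname{ann}_{\cO_{E_5}}(I_{L/E_5})=I_{C_4/E_5}$; thus $\im(\phi)=I_{C_4/E_5}$ and $C_4$ is precisely the curve linked to $L$ inside the Gorenstein curve $E_5$, realizing the residual description of Fanelli--Gruson--Perrin \cite{FGP19}. The Cohen--Macaulayness of $C_4$ is then governed by $\cExt^1_{E_5}(\cO_{C_4},\omega_{E_5})$, which is supported exactly on the embedded points of $C_4$; dualizing $0\lr I_{C_4/E_5}\lr\cO_{E_5}\lr\cO_{C_4}\lr 0$ against $\omega_{E_5}\cong\cO_{E_5}$ identifies this $\cExt^1$ with the cokernel of the restriction map $\cO_{E_5}\lr\cHom_{E_5}(\im(\phi),\cO_{E_5})$. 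The plan is to read off the answer from the isomorphism type of $\im(\phi)$: if $\im(\phi)\cong\cO_L(-2)$ the restriction map is surjective and $C_4$ is Cohen--Macaulay, whereas a degeneration of the image to $\cO_L(-1)$ produces a length-one cokernel, i.e.\ an embedded point.

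The main obstacle is exactly this last identification of the image. By the duality above, $\im(\phi)\cong\omega_L\cong\cO_L(-2)$ as long as $L$ is a reduced line meeting $C_4$ properly, so the isomorphism type of $\im(\phi)$ is really controlled by the local structure of $E_5$ along $L$. I would therefore localize at the length-two scheme $Z=L\cap C_4$ and analyze $E_5$ there using Prokhorov's classification of the (possibly singular) quintic del Pezzo surface $S=X_5\cap V(s_1)$ in which $E_5$ is a hyperplane section \cite{Pro92}: where $S$ is smooth, $E_5$ is a planar divisor and the residual is automatically Cohen--Macaulay, so the non-Cohen--Macaulay locus can only sit over singular points of $S$, and the content of (2) is to verify that precisely there the image drops to $\cO_L(-1)$. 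Matching the local singularity type of $E_5$ along $L$ to the Cohen--Macaulayness of the residual $C_4$ is the delicate step on which the whole dichotomy rests.
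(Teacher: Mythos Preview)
Your Serre-duality computation of \eqref{gensred} is correct and in fact cleaner than the paper's: since $E_5$ is a complete intersection with $\omega_{E_5}\cong\cO_{E_5}$, Grothendieck duality on $E_5$ gives $\Hom_{E_5}(\cO_L(-2),\cO_{E_5})\cong H^1(L,\cO_L(-2))^\vee\cong\CC$ in one line. The paper instead passes to $\Ext^1_{X_5}(\cO_L(-2),I_{E_5/X_5})$ via the structure sequence, feeds in the Koszul resolution \eqref{rese5}, computes the relevant Ext groups on $X_5$ by Serre duality there, and then invokes a semicontinuity argument to go from the generic section to an arbitrary one. Your route avoids both the resolution and the semicontinuity step. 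Part~(1) is essentially the same in both arguments.

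For part~(2), however, your plan takes a harder road than necessary and is left incomplete. First a small but real confusion: once you have proved in (1) that $\phi$ is injective, the \emph{abstract} isomorphism type of $\im(\phi)$ is forced to be $\cO_L(-2)$; what can vary is only how this copy of $\cO_L(-2)$ sits inside $\cO_{E_5}$, i.e.\ whether it is saturated. So the dichotomy you set up (``if $\im(\phi)\cong\cO_L(-2)$ \dots\ whereas a degeneration to $\cO_L(-1)$ \dots'') must be reformulated in terms of the saturation, and your $\cExt^1$ criterion then amounts to asking when that saturation strictly contains $\im(\phi)$. Second, and more importantly, the paper does \emph{not} carry out a local analysis along the singularities of $S=X_5\cap V(s_1)$ via Prokhorov's list, which you flag as the ``delicate step''. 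The paper's converse is instead a two-line global argument: if $C_4$ is not CM, pass to its maximal CM subcurve $\bar{C}\subset C_4$; since $X_5$ contains no plane, the degree--genus bound $p_a\le\tfrac{(d-2)(d-3)}{2}$ for non-planar CM curves forces $\chi(\cO_{\bar C}(m))\in\{4m+1,\,4m\}$, and $4m+1$ is excluded because $\bar C\subsetneq C_4$. Then the kernel of $\cO_{E_5}\twoheadrightarrow\cO_{\bar C}$ is a torsion-free sheaf supported on $L$ (purity of $E_5$) with Hilbert polynomial $5m-4m=m$, hence $\cO_L(-1)$; this is exactly the saturation of $\im(\phi)$, and the length-one quotient $\cO_L(-1)/\cO_L(-2)\cong\CC_p$ is the embedded point of $C_4$. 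No case-by-case study of del Pezzo surface singularities is needed.
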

\begin{proof}
Through some diagram chasing by using the structure sequence: 
\begin{equation}\label{structure}
   \ses{I_{E_5/X_5}}{\cO_{X_5}}{\cO_{E_5}},
\end{equation}
we have $\Hom_{X_5} (\cO_L(-2), \cO_{E_5})\cong \Ext_{X_5}^1(\cO_L(-2), I_{E_5/X_5})$. Also, from the resolution \eqref{rese5} of $I_{E_5/X_5}$, there is an exact sequence
\begin{equation}\label{ie5}
   \begin{split}
0=\Ext_{X_5}^1(\cO_L(-2),\cO_{X_5}(-1)^{\oplus2})&\lr\Ext_{X_5}^1(\cO_L(-2), I_{E_5/X_5})\\ &\lr \Ext_{X_5}^2(\cO_L(-2), \cO_{X_5}(-2))=\CC\lr\cdots
\end{split}
\end{equation}
where the equality of the first term in \eqref{ie5} holds because of the Serre vanishing and the duality theorem, and the third one is 
\begin{equation*}
   \Ext_{X_5}^2(\cO_L(-2), \cO_{X_5}(-2))\cong \Ext_{X_5}^1(\cO_{X_5}, \cO_L(-2))^*\cong\rH^1(\cO_L(-2))^*\cong \rH^0(\cO_L).
\end{equation*}
For generic linear sections $\langle s_1, s_2\rangle$, the result \eqref{gensred} holds and thus it is true for any linear section.

(1) The kernel of the map $\phi$ is of the form $\ker(\phi)=\cO_L(-k)$, $k \geq 3$, then $\text{im}(\phi)$ is a torsion subsheaf in $\cO_{E_5}$. However, this contradicts the fact that $E_5$ is a CM-curve because $E_5$ is a locally complete intersection. Hence, $\ker(\phi)=0$.

(2) If $\text{im}(\phi)=\cO_L(-1)$, then the quotient is $\cO_L(-1)/\cO_L(-2)\cong \CC_{p}\subset \cO_{C_4}$ for a point $p\in C_4$. 
Hence, $\cO_{C_4}$ has a torsion subsheaf.
Conversely, let $\bar{C}$ be the maximal CM-subcurve of the non-CM curve $C_4$. Since $X_5$ does not contain any plane, the curve $\bar{C}$ has Hilbert polynomial $\chi(\cO_{\bar{C}}(m))=4m+1$ or $4m$ by the degree-genus formula (i.e., any non-planar CM-curve holds the genus bound $g\leq \frac{(d-2)(d-3)}{2}$). For the former case, $\bar{C}=C_4$, which is a contradiction to the assumption. Hence, $\chi(\cO_{\bar{C}}(m))=4m$. In this case, by the purity of $E_5$, the kernel of the restriction map $\cO_{E_5}\twoheadrightarrow \cO_{\bar{C}}$ is a torsion-free sheaf supported on the line $L$. By comparing the Hilbert polynomials, we see that $\text{im}(\phi)=\cO_L(-1)$.
\end{proof}

From the proof of (2) of Proposition \ref{idquot}, we need to know the non-existence of CM-curves in $X_5$ with Hilbert polynomial $4m$, which we will discuss in the next subsection.
\subsection{Elliptic quartic curves in $X_5$}
In this subsection, we prove the non-existence of elliptic quartic curves in $X_5$.
\begin{proposition}\label{noncm4}
There does not exist \emph{any} CM-curve $E$ in $X_5$ with $\chi(\cO_E(m))=4m$.
\end{proposition}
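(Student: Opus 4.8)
The plan is to assume for contradiction that such a curve $E\subset X_5$ exists, with $\deg E=4$ and arithmetic genus $p_a(E)=1$, and to show that $E$ must be an elliptic quartic curve in $\Gr(2,V_5)$ of one of the two geometric types discussed above; each type then yields a contradiction. Excluding the first type (the analogue of $\bH_a$) will use that $X_5$ contains no plane, and excluding the second (the analogue of $\bH_b$) will use that every $\Gr(2,V_4)\cap P$ is a conic. The crucial preliminary is to pin down the linear span $\langle E\rangle$.

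First I would check that $E$ is connected and spans exactly a $\PP^3$. Connectedness follows from additivity of $\chi$: if $E=E_1\sqcup E_2$ with $\deg E_i=d_i$ and $d_1+d_2=4$, then $p_a(E_1)+p_a(E_2)=2$, which is incompatible with the genus bound $p_a(E_i)\le\binom{d_i-1}{2}$ for $(d_1,d_2)\in\{(1,3),(2,2)\}$. For the span, a degree-$4$ pure curve lying in a plane is a plane quartic, of arithmetic genus $3\neq 1$, so $\dim\langle E\rangle\ge 3$; and a connected nondegenerate curve in $\PP^r$ has degree $\ge r$, with equality forcing $p_a=0$, so $\dim\langle E\rangle\le 3$. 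Hence $\langle E\rangle=\PP^3$, and since $E\subset X_5\subset P$ with $P$ linear, this $\PP^3$ lies in $P$.

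Next I would exploit that $X_5\subset P=\PP^6$ is cut out by the five quadrics of \eqref{oribeq}. Restricting these to $\langle E\rangle=\PP^3$, each restriction vanishes on $E$ and hence lies in $\rH^0(I_{E/\PP^3}(2))$. Since a nondegenerate pure degree-$4$, genus-$1$ curve in $\PP^3$ is a complete intersection of two quadrics, this space is a pencil and $E=Q_1\cap Q_2$. Writing $t$ for the dimension of the span of the restricted quadrics, one has $t\le 2$, and I would argue by cases: if $t=0$ then $\PP^3\subset X_5$, impossible since $X_5$ contains no plane; if $t=1$ then $X_5\cap\PP^3$ is a quadric surface $S\subset X_5$, impossible because $S\sim kH$ in $\Pic(X_5)=\ZZ H$ forces $\deg S=5k\neq 2$; so $t=2$ and $X_5\cap\PP^3=\Gr(2,V_5)\cap\PP^3=E$.

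It remains to exclude this last case, where $E$ is an elliptic quartic equal to a linear section of the Grassmannian. Here I would analyze the rank-two subspaces $\Pi_\omega\subset V_5$ attached to the points $\omega\in E\subset\Gr(2,V_5)$ and their sum $U=\sum_\omega\Pi_\omega$, invoking the classification of elliptic quartics in $\Gr(2,V_5)$ via $\bH_a$, $\bH_b$ and \cite[Corollary 2]{Bru87}. If the $\Pi_\omega$ share a common vector, then $E$ lies in a three-dimensional linear subspace of $\Gr(2,V_5)$; being two $\PP^3$'s they coincide, so $\PP^3\subset\Gr(2,V_5)$, returning us to the $t=0$ contradiction. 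Otherwise $\dim U=4$ and $E\subset\Gr(2,V_4)$, whence $E\subset\Gr(2,V_4)\cap P$, which is a conic; the surjection $\cO_{\mathrm{conic}}\twoheadrightarrow\cO_E$ then gives $\deg E\le 2$, contradicting $\deg E=4$. The main obstacle is precisely this last step: a general CM curve need not be a smoothing limit of elliptic quartics, so some care is needed to place every such $E$ into one of the two geometric types without assuming irreducibility. This is why I route the argument through the defining quadrics of $X_5$ and the genus–degree bounds rather than quoting the irreducible classification directly; the trichotomy on $t$ is designed to absorb the reducible and non-reduced curves and reduce all of them to the two controllable cases.
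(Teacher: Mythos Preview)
Your argument has two gaps. First, the assertion that every nondegenerate pure degree-$4$ genus-$1$ CM curve in $\PP^3$ is a complete intersection of two quadrics is false: a plane cubic $C\subset\Pi$ together with a line $L\not\subset\Pi$ meeting $C$ at one point is CM, nondegenerate, of Hilbert polynomial $4m$, yet every quadric through it contains the plane $\Pi$, so the pencil of quadrics cuts out $\Pi\cup L$, not $E$. This particular gap is easy to patch---in that case the five quadrics of $X_5$ restricted to $\Pi$ vanish on the cubic $C$ and hence vanish identically, forcing $\Pi\subset X_5$, a contradiction---but as written your $t=2$ conclusion ``$X_5\cap\PP^3=E$'' fails here. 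The paper treats this non-CI case separately by citing \cite[Proposition~2.5]{CCM16}.

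The more serious gap is the one you yourself flag: the dichotomy ``either the $\Pi_\omega$ share a common vector, or $\dim U=4$''. You invoke Brugui\`eres, but \cite{Bru87} only describes the closure of the locus of \emph{irreducible} elliptic quartics in $\Gr(2,V_5)$; a reducible CM curve with Hilbert polynomial $4m$ need not lie in $\bH_a\cup\bH_b$, and nothing in your argument excludes $\dim U=5$. Your claim that the trichotomy on $t$ ``absorbs'' the reducible cases is not correct: the $t=2$ branch still lands you in the unjustified dichotomy. The paper circumvents this entirely by treating the reducible complete-intersection case with a dimension count: the locus of such curves in $\bH_4^1(X_5)$ has dimension at most $7$, below the expected dimension $\chi(N_{E/X_5})=8$, so no such curve can exist. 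This is less elegant than your intended uniform argument, but it avoids having to extend Brugui\`eres' classification beyond its stated scope.
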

\begin{proof}
Using \eqref{structure}, one can show that every elliptic quartic curve $E$ on $X_5$ is contained in a hyperplane section of $X_5$. We now show that such curves do not exist. Let us assume that the CM-curve $E$ is a complete intersection. Let $F=X_5\cap H$ be a hyperplane section of $X_5\subseteq \PP^6$. According to \cite[Section~2]{Pro92}, either $F$ is a normal quintic del Pezzo surface with at most Du Val singularities, or $F$ is a non-normal ruled surface. If $F$ is normal, then let $\psi:\widetilde{F}\to F$ be the minimal resolution of $F$ and let $H'=\psi^*\Oo_{F}(1)$. Note that $\widetilde{F}$ is a (weak) del Pezzo surface of degree~$5$. Suppose that $\widetilde{F}$ contains an elliptic curve $E$ whose image in $X_5$ has degree $4$, i.e., $H'\cdot E=4$. By the adjunction formula, we have $E^2=4$ since $-K_{\widetilde{F}}\cdot E=H'\cdot E=4$. This violates the Hodge index theorem ($16=(H'\cdot E)^2\geq (H'\cdot H')(E\cdot E)=5\times 4=20$). If $F$ is non-normal, then, according to \cite[Lemma~2.1]{Pro92}, there exists a morphism $\psi:\FF_m\to F$, where $\FF_m=\PP(\cO_{\PP^1}\oplus \cO_{\PP^1}(-m))$ is the $m$-th Hirzebruch surface for $m=1$ or $3$. We denote $H'=\psi^*\Oo_{F}(1)$, let $s$ be the negative section, and let $f$ be the fiber class of $\FF_m$. Note that $\psi$ is defined by a sublinear system of $|s+3f|$ if $m=1$, and of $|s+4f|$ if $m=3$. If $m=1$, then $H'=s+3f$ and $K_{\mathbb{F}_{1}}=-2s-3f$. For a curve $E$ with numerical class $E=as+bf$, the $H'$-degree $d(E)$ and the arithmetic genus $g(E)$ of $E$ are given by
\[
d(E)=2a+b \quad\text{and}\quad g(E)=\frac{1}{2} (a-1)(2b-a-2).
\]
Otherwise, if $m=3$, then $H'=s+4f$ and $K_{\mathbb{F}_{3}}=-2s-5f$. For a curve $E$ with numerical class $E= as+bf$, the $H'$-degree $d(E)$ and the arithmetic genus $g(E)$ of $E$ are given by
\[ 
d(E)=a+b \quad\text{and}\quad g(E)=\frac{1}{2} (a-1)(2b-3a-2).
\]
In either case, there are no integers $a$ and $b$ such that $d(E)=4$ and $g(E)=1$.
Lastly, if the CM-curve $E$ is not a complete intersection, then its defining equations are given by two quadrics having the common linear factor and a cubic form (see \cite[Proposition 2.5, (2) and (3)]{CCM16}). This implies that a plane is contained in $X_5$, which is absurd.

\end{proof}

\subsection{Construction of a morphism from a Grassmannian bundle to the Hilbert scheme}
\begin{proposition}\label{cmrat}
Let $C\subset X_5$ be a curve with Hilbert polynomial $\chi(\cO_C(m))=4m+1$. Then,
\begin{enumerate}
\item the curve $C$ is CM and
\item the linear span of the curve $C$ is $\langle C\rangle =\PP^4$. 
\end{enumerate}
\end{proposition}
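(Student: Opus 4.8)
The plan is to prove (1) and (2) separately, in each case exploiting two structural facts about $X_5$: it is cut out scheme-theoretically by the quadrics \eqref{oribeq}, and $\Pic(X_5)=\ZZ\langle\Oo_{X_5}(1)\rangle$, so that every surface $S\subset X_5$ has degree $S\cdot H^2 = 5m\geq 5$, where $H$ is the hyperplane class. These will let me turn ``a low-dimensional linear space meets $X_5$ in too much'' into a numerical contradiction.

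For (1) I would argue by contradiction, following the template of Proposition \ref{idquot}(2). Suppose $C$ is not CM and let $\bar C\subset C$ be its maximal CM (pure one-dimensional) subcurve, so that $\Oo_C$ has a torsion subsheaf of finite length $h\geq 1$ and $\chi(\Oo_{\bar C}(m))=4m+(1-h)$, i.e. $p_a(\bar C)=h\geq 1$. First I would check $\bar C$ is non-planar: if $\bar C$ lay in a plane $\Pi\cong\PP^2$, then each defining quadric of $X_5$ would restrict to a conic on $\Pi$ vanishing on the degree-$4$ curve $\bar C$, hence vanish identically, forcing $\Pi\subset X_5$ and contradicting the degree bound above. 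The genus bound $g\leq\tfrac{(d-2)(d-3)}{2}=1$ for non-planar CM curves of degree $d=4$ then forces $h=p_a(\bar C)=1$, so $\bar C$ is a CM curve with $\chi(\Oo_{\bar C}(m))=4m$, an elliptic quartic CM curve. This is exactly what Proposition \ref{noncm4} forbids, a contradiction; hence $C$ is CM.

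For (2) I would first establish $\dim\langle C\rangle\leq 4$. I claim $C$ is connected: decomposing $C=\bigsqcup C_i$ into connected CM components, the sum of the $p_a(C_i)$ equals the number of components minus one, so a disconnected $C$ would have a component of arithmetic genus $\geq 1$, necessarily of degree $3$ and hence (by the genus bound) a plane cubic, which is impossible in $X_5$ just as above. Thus $C$ is connected, $h^0(\Oo_C)=1$, and $h^1(\Oo_C)=0$. Cutting by a general hyperplane gives a non-zero-divisor section and a sequence $0\to\Oo_C\to\Oo_C(1)\to\Oo_Z\to 0$ with $Z$ finite, so $h^1(\Oo_C(1))$ is a quotient of $h^1(\Oo_C)=0$; therefore $h^0(\Oo_C(1))=\chi(\Oo_C(1))=5$ and $\dim\langle C\rangle\leq h^0(\Oo_C(1))-1=4$. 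For the matching lower bound I would rule out $\langle C\rangle\subseteq\PP^3$: if $C$ spanned a $\Lambda\cong\PP^3$, the five defining quadrics of $X_5$ would restrict to a space of quadrics on $\Lambda$ vanishing on $C$, of dimension $r\leq h^0(I_{C/\Lambda}(2))$. A Riemann--Roch computation with $h^0(\Oo_C(2))=\chi(\Oo_C(2))=9$ (again $h^1(\Oo_C(2))=0$) and the $2$-normality $h^1(I_{C/\Lambda}(2))=0$ of a nondegenerate genus-$0$ quartic gives $h^0(I_{C/\Lambda}(2))=10-9=1$, so $r\leq 1$. Then $\dim(\Lambda\cap X_5)\geq 3-r\geq 2$, so $X_5$ would contain a surface lying in $\Lambda$ cut out there by at most one quadric, of degree $\leq 2<5$, contradicting $\Pic(X_5)=\ZZ\langle H\rangle$. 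The case $\langle C\rangle\subseteq\PP^2$ is excluded as in (1), and combining the two bounds yields $\langle C\rangle=\PP^4$.

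The main obstacle is precisely the exclusion of $\langle C\rangle\subseteq\PP^3$: unlike the plane case, a degree-$4$ arithmetic-genus-$0$ curve genuinely embeds in $\PP^3$, so one cannot argue purely numerically and must invoke the specific geometry of $X_5$, namely that it is an intersection of quadrics and contains no surface of degree $<5$. Verifying the cohomological inputs $h^1(\Oo_C(1))=h^1(\Oo_C(2))=0$ and the $2$-normality used above for possibly reducible or non-reduced CM curves is the technical heart; I expect these to follow from the connectedness together with the non-zero-divisor hyperplane-section argument, but they require care.
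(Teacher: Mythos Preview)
Your argument for (1) coincides with the paper's: pass to the maximal CM subcurve $\bar C$, use that $X_5$ contains no plane to make $\bar C$ non-planar, apply the genus bound to force $p_a(\bar C)=1$, and invoke Proposition~\ref{noncm4}.

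For (2) the endgame is also the same---both you and the paper exclude $\langle C\rangle=\PP^3$ by showing $h^0(I_{C/\PP^3}(2))=1$, so that the five defining quadrics of $X_5$ restrict on $\PP^3$ to a single quadric and hence cut out a quadric surface inside $X_5$, contradicting $\Pic(X_5)=\ZZ\langle H\rangle$. The paper reads $h^0(I_{C/\PP^3}(2))=1$ directly from the resolution of $\Oo_C$ in \cite[Theorem~6.1]{CCM16}; you instead try to deduce it from $h^1(\Oo_C(2))=0$ together with $2$-normality.

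There is a real gap at ``$C$ connected $\Rightarrow h^0(\Oo_C)=1$''. This inference is false for non-reduced curves: the double line of class $(2,0)$ on a smooth quadric in $\PP^3$ is a connected CM curve with $h^0=2$. Since $h^1(\Oo_C)=0$, and with it all of $h^1(\Oo_C(k))=0$ and the upper bound $\dim\langle C\rangle\leq 4$, rest on this step, it cannot be skipped. The paper fills exactly this hole by citing \cite[Lemma~3.17]{CC11}: a CM curve with $\chi(\Oo_C)=1$ has \emph{stable} structure sheaf, so $H^0(\Oo_C)=\End(\Oo_C)=\CC$. The $2$-normality you flag is a second, independent gap: it is not a formal consequence of $h^1(\Oo_C(2))=0$, since surjectivity of $H^0(\Oo_{\PP^3}(2))\to H^0(\Oo_C(2))$ is not automatic for a curve that fails to be linearly normal in $\PP^3$; the paper sidesteps this via the \cite{CCM16} resolution. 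Finally, a small correction to your connectedness step: you implicitly use $p_a(C_i)\geq 0$, which can fail for non-reduced components; the clean argument is that $\sum p_a(C_i)=k-1\geq 1$ while the genus bound forces $p_a(C_i)\leq 0$ for every non-planar component of degree $\leq 3$, already a contradiction.
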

\begin{proof}
(1) By the proof of (2) in Proposition \ref{idquot}, whenever the curve $C$ is not a CM-curve, then its maximal CM-subcurve must be an elliptic quartic curve. Such curves do not exist in $X_5$ by Proposition \ref{noncm4}.

(2) Since $C$ is a CM-curve with $\chi(\cO_C)=1$, the structure sheaf $\cO_C$ is stable by Lemma 3.17 in \cite{CC11}. Clearly, $\langle C\rangle \not\subset \PP^2$. Assume that $\langle C\rangle =\PP^3$. In \cite[Theorem 6.1]{CCM16}, it was proved that the sheaf $\cO_C$ fits into an exact sequence
\begin{equation}\label{eq:resqur}
   0\lr \cO_{\PP^3}(-3)^{\oplus 3}\lr \Omega_{\PP^3}(-1)\lr \cO_{S}\lr \cO_C\lr 0
\end{equation}
for some quadric surface $S\subset \PP^3$. From the resolution of $\cO_C$ in \eqref{eq:resqur}, one can see that the dimension $h^0(I_{C/\PP^3}(2))=1$ for the ideal sheaf $I_{C/\PP^3}$. Among the hypersurfaces defined by the five quadric generators of $X_5$ in \eqref{oribeq}, at least one intersection with $\langle C\rangle = \PP^3$ yields a quadric surface. Since $h^0(I_{C/\PP^3}(2)) = 1$, this surface is unique, and hence the del Pezzo variety $X_5$ contains the quadric surface $S$. With respect to the rank of the quadric form $q$ defining $S$, one can easily see that it contradicts the geometry of $X_5$. In fact, if $\mathrm{rk}(q)=4$, then $\bH_1(S)\cong \PP^1 \sqcup \PP^1$ is contained in $\bH_1(X_5)\cong \PP^2$, which contradicts B\'ezout's theorem. If $\mathrm{rk}(q)=3$, then there exists an infinite family of lines passing through the vertex, contradicting the fact that there are at most three lines passing through a point in $X_5$ (\cite[Section 2.1]{Ili94}). Finally, suppose that $\mathrm{rk}(q)\le 2$. Then $\PP^2\subset S\subset X_5$, which is impossible.
\end{proof}
We are ready to propose one of the main ingredients in this paper. Let $\cL$ be the universal lines in $\bH_1(X_5)\times X_5$ with the projection map $\pi: \bH_1(X_5)\times X_5\lr \bH_1(X_5)$ into the first component. Let us consider the direct image sheaf $\cF=\pi_*\cI_{\cL}(1)$. Since $\rH^0(I_{L/X_5}(1))\cong \CC^5$ for each line $L$ in $X_5$, the sheaf $\cF$ is locally free on the Hilbert scheme $\bH_1(X_5)\cong \PP^2$ (\cite[Section 2]{FN89}). Let $p:\bG=\Gr(2, \cF)\lr \bH_1(X_5)$ be the relative Grassmannian bundle over $\bH_1(X_5)$ which parametrizes linear subspaces $\PP^4$ containing a line in $X_5$.  As a set, the Grassmannian bundle $\bG$ is an incident variety: 
\begin{equation*}
   \bG=\{(l, H)\mid l\subset H, H\cong \PP^4\subset \PP^6, [l]\in \bH_1(X_5)\}.
\end{equation*}
\begin{proposition}\label{bij_morph}
Under the above definition and notation, there exists an injective morphism from the flag variety $\bG$ to the Hilbert scheme $\bH_4(X_5)$ which associates $(l, H)$ to the support of the cokernel $\text{coker}\{\cO_L(-2)\stackrel{\phi}{\lr} \cO_{X_5\cap H}\}$ (Section \ref{redsec}).
\end{proposition}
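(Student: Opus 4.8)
The plan is to promote the pointwise construction of Section~\ref{redsec} to a family over the flag variety $\bG$, producing a flat family of quartic curves and hence a classifying morphism to $\bH_4(X_5)$, and then to recover the pair $(l,H)$ from the quartic curve it produces, which yields injectivity.

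\emph{Construction of the morphism.} Write $r\colon \bG\times X_5\to\bG$ and $q\colon\bG\times X_5\to X_5$ for the two projections, let $\cL_{\bG}\subset\bG\times X_5$ be the pullback of the universal line, and let $\cS\subset p^*\cF$ be the tautological rank-two subbundle on $\bG=\Gr(2,\cF)$. Pulling back the counit $\pi^*\pi_*\cI_{\cL}(1)\to\cI_{\cL}(1)$ to $\bG\times X_5$ and precomposing with $r^*\cS\hookrightarrow r^*p^*\cF$ gives an evaluation map
\begin{equation*}
r^*\cS\otimes q^*\cO_{X_5}(-1)\lr \cI_{\cL_{\bG}},
\end{equation*}
the family version of the pair $(s_1,s_2)$ appearing in \eqref{rese5}. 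The associated relative Koszul complex is exact, since fibrewise the two sections cut out the codimension-two curve $E_5$, and it reads
\begin{equation*}
0\lr r^*(\det\cS)\otimes q^*\cO_{X_5}(-2)\lr r^*\cS\otimes q^*\cO_{X_5}(-1)\lr \cI_{\cE_5}\lr 0,
\end{equation*}
where $\cE_5\subset\bG\times X_5$ is the universal codimension-two linear section, flat over $\bG$ and restricting to $E_5$ on each fibre; this globalizes \eqref{rese5}. By Proposition~\ref{idquot} the space $\Hom_{X_5}(\cO_L(-2),\cO_{E_5})$ is one-dimensional for \emph{every} point of $\bG$, so by cohomology and base change the relative homomorphism sheaf $\cM:=r_*\cHom(\cO_{\cL_{\bG}}(-2),\cO_{\cE_5})$ is a line bundle on $\bG$ whose formation commutes with base change. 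The tautological section of $\cM^{\vee}\otimes\cM$ then yields a universal homomorphism
\begin{equation*}
\Psi\colon \cO_{\cL_{\bG}}(-2)\otimes r^*\cM\lr \cO_{\cE_5}
\end{equation*}
restricting fibrewise to a generator of $\Hom_{X_5}(\cO_L(-2),\cO_{E_5})$, hence to the injective map $\phi$ of \eqref{phmap}. Its cokernel $\cO_{\cC}:=\coker(\Psi)$ is a quotient of $\cO_{\cE_5}$, so $\cC\subset\bG\times X_5$ is a closed subscheme restricting fibrewise to a curve with $\chi(\cO_{C_4}(m))=4m+1$ by Proposition~\ref{idquot}(1). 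As this Hilbert polynomial is constant and $\bG$ is integral, $\cO_{\cC}$ is flat over $\bG$; moreover $\cC$ is unaffected by twisting $\Psi$ with a pullback line bundle, so it is canonically defined. The universal property of $\bH_4(X_5)$ then produces the morphism $\bG\to\bH_4(X_5)$ sending $(l,H)\mapsto[C_4]$.

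\emph{Injectivity.} It suffices to recover the flag $(l,H)$ from $C_4$. By Proposition~\ref{cmrat}(2) the span $\langle C_4\rangle$ is a $\PP^4$; since $C_4\subset H\cong\PP^4$ by construction, we get $H=\langle C_4\rangle$, so $H$, and hence $E_5=X_5\cap H$, is determined by $C_4$. Since $\phi$ is injective with $\coker(\phi)=\cO_{C_4}$ (Proposition~\ref{idquot}(1)), the kernel of the canonical surjection $\cO_{E_5}\twoheadrightarrow\cO_{C_4}$ is $\im(\phi)$, a sheaf supported on $L$; thus $L$ is recovered as this support, namely the residual line of $C_4$ in $E_5$ --- the unique secant line of $C_4$ when $C_4$ is irreducible, by \cite[Lemma~4.14]{FGP19}. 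Hence $l=[L]$ is determined by $C_4$, both coordinates of the flag are recovered, and the morphism is injective on points.

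\emph{Main obstacle.} The delicate point is the base-change step producing the line bundle $\cM$: one must know that $\dim\Hom_{X_5}(\cO_L(-2),\cO_{E_5})=1$ uniformly over all of $\bG$, including the non-generic flags where $E_5$ and $C_4$ degenerate, so that no jumping occurs and Grauert's criterion applies; one must likewise control the relevant higher relative $\Ext$ so that the formation of $\phi$ genuinely commutes with base change. This uniformity is exactly the content of Proposition~\ref{idquot}, established there for generic sections and propagated to all sections. Granting it, the constancy of the fibrewise Hilbert polynomial $4m+1$ from Proposition~\ref{idquot}(1) secures the flatness of $\cC$, and the morphism follows.
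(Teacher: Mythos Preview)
Your proof is correct and follows essentially the same approach as the paper: relativize the Koszul resolution \eqref{rese5} to build the universal elliptic quintic $\cE_5$, use the one-dimensionality from Proposition~\ref{idquot} to make the relative $\cHom$-sheaf $\cM$ a line bundle, extract the universal map $\Psi$ from the identity section, and pass to the cokernel. Your injectivity argument is a touch more explicit than the paper's---you spell out that $L$ is recovered as the support of $\ker(\cO_{E_5}\twoheadrightarrow\cO_{C_4})$, whereas the paper compresses this to ``$\langle C_4\rangle=\langle C_4'\rangle$, they determine the same line''---and you are more careful about flatness of $\cC$ via constancy of the Hilbert polynomial, which the paper leaves implicit. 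The only notable difference is that the paper asserts the stronger fact $\cM\cong p^*\cO_{\PP^2}(k)$ (pulled back from the base $\bH_1(X_5)$), while you only need and only claim that $\cM$ is a line bundle on $\bG$; your weaker statement suffices for the construction.
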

\begin{proof}
Let $\cU$ be the universal subbundle on the relative Grassmannian $\bG$. We relativize the exact sequence in \eqref{rese5}. Let $\overline{\pi}:\bG\times X_5\lr \bG$ be the projection map into the first component. 
\[
\xymatrix{\bG\times X_5\ar[r]^{p\times \text{id}} \ar[d]_{\bar{\pi}}& \bH_1(X_5)\times X_5\ar[d]^{\pi}\ar[r]&X_5\\
\bG\ar[r]^{p}& \bH_1(X_5)&}
\]
From the natural map $\pi^*\pi_*\cI_{\cL}(1)\lr \cI_{\cL}(1)$ and by pulling back the canonincal subbundle $\cU\subset p^*\cF$ on the Grassmannian bundle $\bG$ along the map $\overline{\pi}$, we have a sheaf homomorphism
\[
\Phi:\overline{\pi}^*\cU \lr \overline{\pi}^*p^*\cF=(p\times \text{id})^*\pi^*\cF=(p\times \text{id})^*\pi^*\pi_*\cI_{\cL}(1)\lr (p\times \text{id})^*\cI_{\cL}(1)\subset \cO(1).
\]
The Koszul complex of $\overline{\pi}^*\cU(-1):= \overline{\pi}^{\ast}\mathcal{U} \otimes \mathcal{O}_{X_5}(-1)$ with the twisted map $\Phi$ provides a locally free resolution of elliptic quintic curves in $X_5$. That is, there exists a locally free resolution:
\[
0\lr \wedge^2 \overline{\pi}^*\cU (-2)\lr \overline{\pi}^*\cU(-1) \lr \cI_{\cE_5}\lr 0
\]
where $\cE_5$ is the universal elliptic quintic curve over $\bG\times X_5$. Also, we can glue maps in \eqref{phmap} as follows. Consider the relative Hom-sheaf
\[
\cM=\cH om_{\overline{\pi}}((p\times\text{id})^*\cO_{\cL}(-2),  \cO_{\cE_5})
\]
on $\bG$. As we have seen in the proof of Proposition \ref{idquot}, each fiber of $\cM$ is completely determined by the structure sheaf of a line. Therefore, $\cM\cong p^*\cO_{\PP^2}(k)$, $k\in \ZZ$ for the canonical projection map $p: \bG\lr \bH_1(X_5)\cong \PP^2$. From the adjunction formula, there exists an isomorphism:
\begin{equation}\label{idhom}
\begin{split}
\Hom_{\bG}(\cM, \cM)&=\Hom_{\bG}(\cO_{\bG}, \cM\otimes \cM^*)= \rH^0( \bG, \cH om_{\overline{\pi}}((p\times\text{id})^*\cO_{\cL}(-2),  \cO_{\cE_5}\boxtimes\cM^*))\\
&\cong \Hom_{\bG\times X_5}((p\times\text{id})^*\cO_{\cL}(-2),  \cO_{\cE_5}\boxtimes\cM^*).
\end{split}
\end{equation}
The identity map $\text{id}\in \Hom(\cM, \cM)$ in \eqref{idhom} induces a sheaf homomorphism
\[
\Psi: (p\times\text{id})^*\cO_{\cL}(-2)\boxtimes \cO_{\PP^2}(k) \lr \cO_{\cE_5}
\]
over $\bG\times X_5$. The cokernel of $\Psi$ parametrizes rational quartic curves in $X_5$. Hence, from the universal property of the Hilbert scheme, we have a morphism
\begin{equation}\label{biregu}
\bar{\Psi}:\bG\longrightarrow \bH_4(X_5).
\end{equation}
 For the injectivity of $\bar{\Psi}$, suppose that $C_4=C_4'$. Then since $\langle C_4\rangle=\langle C_4'\rangle$, they determine the same line.
\end{proof}

\section{Geometry of Hilbert scheme of rational quartic curves}\label{sec:georatio}
In this section, we will prove that $\bH_4(X_5)$ is smooth (Proposition \ref{pro:sm}) and irreducible (Proposition \ref{irred}). Hence, we conclude that the injective morphism $\bar{\Psi}$ in \eqref{biregu} is an isomorphism by Zariski's main theorem.

\subsection{$\CC^*$-fixed rational quartic curves}
\begin{lemma}
Let $\bG$ be the Grassmannian bundle over $\bH_1(X_5)\cong \PP^2$. Then the fixed loci of $\bG$ are  isolated. That is, it consists of $30$ points.
\end{lemma}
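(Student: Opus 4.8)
The plan is to exploit the $\CC^*$-equivariance of the projection $p:\bG\to\bH_1(X_5)\cong\PP^2$. Since $p$ is equivariant, every $\CC^*$-fixed point of $\bG$ maps to a fixed point of the base, and conversely the fixed locus of $\bG$ is the disjoint union, over the fixed points $[L]$ of $\PP^2$, of the $\CC^*$-fixed locus of the fiber $p^{-1}([L])$. By the analysis of Section~\ref{sub:fixl} there are exactly three fixed lines $L_{x^2},L_{xy},L_{y^2}$ in $X_5$, and these are precisely the three isolated fixed points of $\bH_1(X_5)\cong\PP^2$; hence it suffices to show that the fiber over each of them carries an isolated fixed locus of ten points.

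First I would identify each fiber as a torus-representation. Over a line $L$ the fiber $p^{-1}([L])$ is the Grassmannian $\Gr(2,\cF_L)$ of $\PP^4$'s through $L$, where $\cF_L=\rH^0(I_{L/X_5}(1))=\rH^0(I_{L/\PP^6}(1))$ is five-dimensional, the restriction $\rH^0(\cO_{\PP^6}(1))\to\rH^0(\cO_{X_5}(1))$ being an isomorphism. Because $L$ is $\CC^*$-fixed, the torus acts linearly on $\cF_L$, and the key step is to read off this weight decomposition from the explicit orbit-coordinate description of the fixed lines in the Remark of Section~\ref{sub:fixl}. For instance $L_{y^2}=\{y^5(sx+ty)\}$ is the line spanned by the coordinate points $a_{-4}$ and $a_{-6}$, so the forms vanishing on it are $\cF_{L_{y^2}}=\langle a_6,a_4,a_2,a_0,a_{-2}\rangle$; recalling that each coordinate $a_m$ is a $\CC^*$-eigenvector whose weight is an injective linear function of the index $m$, these five forms carry pairwise distinct weights. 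The analogous computation for $L_{xy}=\{xy(sx^4-ty^4)\}$ and $L_{x^2}=\{x^5(sx+ty)\}$ gives $\cF_{L_{xy}}=\langle a_6,a_2,a_0,a_{-2},a_{-6}\rangle$ and $\cF_{L_{x^2}}=\langle a_2,a_0,a_{-2},a_{-4},a_{-6}\rangle$, again with five distinct weights in each case.

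With the weights in hand the count is immediate. On a Grassmannian $\Gr(2,W)$ with $\dim W=5$, a diagonal $\CC^*$-action of pairwise distinct weights fixes exactly the coordinate $2$-planes, of which there are $\binom{5}{2}=10$, and these are isolated. This is the same isolation phenomenon as in Lemma~\ref{flocp}: the distinctness of the weights on each $\cF_L$ is precisely what rules out a positive-dimensional fixed component. Summing over the three fibers yields $3\times 10=30$ isolated fixed points, and since every $\CC^*$-fixed point of $\bG$ arises this way, $\bG^{\CC^*}$ consists of exactly these $30$ points.

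The main obstacle is the middle step: one must compute the $\CC^*$-representation $\cF_L$ for each fixed line and verify that its five weights are pairwise distinct, since a repeated weight would produce a sub-Grassmannian inside the corresponding fiber and destroy isolation. The favourable arithmetic---that in all three cases the surviving coordinates $a_m$ have distinct indices---is exactly what forces the fixed locus to be finite, and it is the only point requiring genuine verification; the reduction to fibers and the final tally are formal consequences of equivariance.
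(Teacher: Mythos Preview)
Your proof is correct and follows essentially the same route as the paper's: reduce via the equivariant projection to the three fixed lines, identify each fiber as $\Gr(2,5)$ with the torus acting through the five linear forms cutting out the line, and count ten fixed coordinate $2$-planes per fiber. The paper phrases the last step as an application of Lemma~\ref{flocp} after checking the weight multiplicities in $\wedge^2\cF_L$, whereas your direct observation that pairwise distinct weights on $\cF_L$ force every invariant $2$-plane to be a coordinate plane is a slightly cleaner shortcut to the same conclusion.
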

\begin{proof}
Let $C$ be $\CC^*$-fixed rational quartic curves in $X_5$. From Proposition \ref{cmrat}, $\langle C\rangle=\PP^4$. So the linear space $\PP^4$ is fixed. Furthermore, $X_5\cap \PP^4=C\cup L$ is also fixed. Therefore, we need to consider only fixed lines in $X_5$. The fixed lines are defined by the ideal in $\PP^6$ (Section \ref{sub:fixl}):
\[
l_0=\langle a_6, a_2, a_0, a_{-2}, a_{-6}\rangle, \ l_1= \langle a_2, a_0, a_{-2}, a_{-4}, a_{-6}\rangle, \ l_2=\langle a_{-2}, a_0, a_{2}, a_{4}, a_{6}\rangle.
\]
The fiber of $\bG$ over each line $l_i$ is the Grassmannian $\Gr(2, l_0)$ when we regard $l_i$ as the subspace of $\rH^0(\cO_{\PP^6}(1))$. If we count the multiplicity of weights in the space $\wedge^2 l_0$, one can easily see that every fixed locus is isolated by Lemma \ref{flocp}. The defining ideals are presented in Table \ref{table_C4}. Among the total of thirty cases, \emph{symmetric} ones are listed only once
 --- only the second case (denoted with $^\ast$) has no symmetry. Here, we mean symmetric if the defining ideals have the same generators after replacing $a_i$ with $a_{-i}$.
\end{proof}

\begin{table}[h]
   \setlength{\tabcolsep}{2.5pt}
   \centering
   \caption{$\mathbb{C}^{\ast}$-fixed reducible quartic curves}
   \label{table_C4}
   \begin{tabular}{|c|c|c|} 
    \hline
    \rule{0pt}{2ex} 
    \emph{No.} & $I_{C/\PP^6}$ & $C$   \\ [0.2ex] 
    \hline
    &  &   \\ [-2ex]
    \emph{1} & $\langle a_0,a_4,a_{-2}a_{-4},a_2a_{-4},a_6a_{-4},3a_{-2}^2+a_2a_{-6},9a_2a_{-2}-a_6a_{-6},3a_2^2+a_6a_{-2} \rangle$ &  $l_2+C_3$  \\[0.6ex] 
    \emph{\hspace{0.1cm}2$^\ast$} & $\langle a_{-2},a_2,a_4a_{-6},a_0a_{-4},a_4a_{-4},a_6a_{-4},8a_0^2+a_6a_{-6},a_4a_0 \rangle$  & $l_1+l_2+C_2$  \\[0.6ex]
    \emph{3} & $\langle a_{-2},a_2,a_4a_{-6},a_0a_{-4},a_6a_{-4},8a_0^2+a_6a_{-6},a_4a_0,a_6a_4 \rangle$ & $l_0+l_2+C_2$ \\[0.6ex]
    \emph{4} & $\langle a_2,a_4,a_0a_{-4},a_6a_{-4},a_{-2}^2,a_0a_{-2},a_6a_{-2},8a_0^2+a_6a_{-6} \rangle$  & $2l_2+C_2$  \\[0.6ex]
    \emph{5} & $\langle a_0,a_2,a_4a_{-6},a_6a_{-6},a_6a_{-4},a_{-2}^2,a_4a_{-2},a_6a_{-2} \rangle$  & $l_0+l_1+2l_2$  \\[0.6ex]
    \emph{6} & $\langle a_0,a_2,a_{-2}a_{-6},a_4a_{-6},a_6a_{-6},a_{-2}^2,3a_4a_{-2}-a_6a_{-4},a_6a_{-2} \rangle$ & $2l_0+l_1+l_2$  \\[0.6ex]
    \emph{7} & $\langle a_{-2},a_4,a_{-4}^2,4a_0a_{-4}-a_2a_{-6},a_2a_{-4},8a_0^2+a_6a_{-6},2a_2a_0+a_6a_{-4},a_2^2 \rangle$  & $2C_2$  \\[0.6ex]
    \emph{8} & $\langle a_{-2},a_6,4a_0a_{-4}-a_2a_{-6},3a_2a_{-4}-a_4a_{-6},a_0^2,a_2a_0,a_4a_0,a_2^2 \rangle$ &  $2l_0+2l_2$   \\[0.6ex]
    \emph{9} & $\langle a_0,a_2,a_4a_{-6},a_6a_{-6},a_{-2}^2,3a_4a_{-2}-a_6a_{-4},a_6a_{-2},a_6^2 \rangle$ & $2l_0+2l_2$ \\[0.6ex]
    \emph{10} & $\langle a_0,a_6,a_2a_{-6},3a_2a_{-4}-a_4a_{-6},a_{-2}^2,a_2a_{-2},a_4a_{-2},a_2^2 \rangle$  &  $2l_0+2l_2$  \\[0.6ex]
    \emph{11} & $\langle a_{-2},a_6,a_0a_{-6},4a_0a_{-4}-a_2a_{-6},3a_2a_{-4}-a_4a_{-6},a_0^2,a_2a_0,3a_2^2-4a_4a_0 \rangle$ & $3l_0+l_2$  \\[0.6ex]
    \emph{12} &$\langle a_2,a_6,a_4a_{-6},3a_{-2}^2-4a_0a_{-4},a_0a_{-2},a_4a_{-2},a_0^2,a_4a_0 \rangle$ &  $l_0+3l_2$ \\[0.6ex]
    \emph{13} & $\langle a_0,a_6,3a_2a_{-4}-a_4a_{-6},3a_{-2}^2+a_2a_{-6},a_2a_{-2},a_4a_{-2},a_2^2,a_4a_2 \rangle$  &  $l_0+3l_2$ \\[0.6ex]
    \emph{14} & $\langle a_4,a_6,3a_{-2}^2-4a_0a_{-4}+a_2a_{-6},2a_0a_{-2}-3a_2a_{-4},a_2a_{-2},a_0^2,a_2a_0,a_2^2\rangle$  &$4l_2$  \\[0.6ex]
    \emph{15} & $\langle a_2,a_6,3a_{-2}^2-4a_0a_{-4},2a_0a_{-2}+a_4a_{-6},a_4a_{-2},a_0^2,a_4a_0,a_4^2 \rangle$ & $4l_2$ \\[0.6ex]
    \hline
 \end{tabular}  
 \end{table}

\begin{example}\label{ex:irqu}
There exists a unique $\CC^*$-fixed rational normal curve $C_4$ whose defining equation is given by
\begin{equation*}
   \begin{split}
I_{C_4/\PP^6}=\langle a_{-6}, a_6, 3a_{-2}^2-4a_0a_{-4}, &2a_0a_{-2}-3 a_2a_{-4}, a_2a_{-2}-2a_4a_{-4}, \\ &4a_0^2-9a_4a_{-4}, 2a_2a_0-3a_{4}a_{-2}, 3a_2^2-4a_4a_0
\rangle,
\end{split}
\end{equation*}
where the degree two parts of the ideal $I_{C_4/\PP^6}$ are given by $2\times 2$-minors of the matrix
\[\begin{bmatrix}
\frac{27}{16}a_4 & \frac{9}{8}a_2 & a_0 & a_{-2}\\
\frac{9}{8}a_2 & a_0 & a_{-2} & \frac{4}{3}a_{-4}
\end{bmatrix}.
\]
\end{example}
In Section \ref{sec:rat}, we have already analyzed all of $\CC^*$-fixed rational curves of degree $\leq3$. Hence, by Example \ref{ex:irqu}, the reducible fixed curve up to the degree $d\leq 4$ has the configurations as in Figure \ref{fig:re4}.
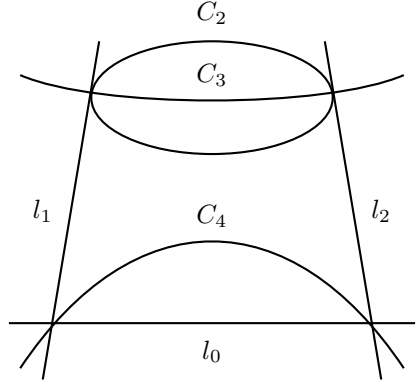
\begin{figure}[ht]
\caption{Reducible rational curves of degree $d\leq 4$}
\label{fig:re4}
\begin{tikzpicture}[scale=1.5]

\draw[thick] (0.0,0.0)--(0.5,3.0);
\draw[thick] (3.0,0.0)--(2.5,3.0);
\draw[thick] (-0.3,0.5)--(3.3,0.5);
\draw[thick] (1.5,2.5) ellipse (1.07 and 0.5);
\draw[thick] (-0.2,2.7) .. controls (0.5,2.4) and (2.5,2.4) .. (3.2,2.7);
\draw[thick] (-0.2,0.1) .. controls (0.8,1.6) and (2.2,1.6) .. (3.2,0.1);

\node at (0.0,1.5) {\footnotesize $l_1$};
\node at (3.0,1.5) {\footnotesize $l_2$};
\node at (1.5,0.25) {\footnotesize $l_0$};
\node at (1.5,3.25) {\footnotesize $C_2$};
\node at (1.5,2.7) {\footnotesize $C_3$};
\node at (1.5,1.45) {\footnotesize $C_4$};

\end{tikzpicture}
\end{figure}


\begin{proposition}\label{pro:sm}
The Hilbert scheme $\bH_4(X_5)$ is a smooth variety.
\end{proposition}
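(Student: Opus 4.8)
The plan is to reduce the global statement to a finite local check at the $\CC^*$-fixed points, using properness of the Hilbert scheme together with the torus action, and there to sandwich the local dimension between the image of the injective morphism $\bar\Psi$ from below and the Zariski tangent space from above. First I would record the two structural inputs: $\bH_4(X_5)$ is a projective scheme, being the Hilbert scheme of the projective variety $X_5$ with fixed Hilbert polynomial, and it carries the $\CC^*$-action induced by the fixed one-parameter subgroup of $\SL_2$ with distinct weights used throughout Section~\ref{sec:rat}. The singular locus $\Sigma=\mathrm{Sing}(\bH_4(X_5))$ is then a closed, $\CC^*$-invariant subscheme. Since $\bH_4(X_5)$ is proper and $\CC^*$ is a torus, any nonempty closed invariant subscheme contains a $\CC^*$-fixed point (take a point, pass to its orbit closure, and use that the limits as $t\to 0,\infty$ exist and are fixed). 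Hence it suffices to prove that $\bH_4(X_5)$ is smooth at each of its $\CC^*$-fixed points, i.e.\ at the finitely many curves enumerated in Table~\ref{table_C4} and Example~\ref{ex:irqu}.

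Next I would fix the local dimension from below using $\bar\Psi$. The source $\bG$ is smooth and irreducible of dimension $8$, and since $\bar\Psi$ is injective (Proposition~\ref{bij_morph}) and proper, its image $Z=\bar\Psi(\bG)\subset \bH_4(X_5)$ is a closed irreducible subvariety of dimension $8$. By the analysis preceding Table~\ref{table_C4}, every $\CC^*$-fixed quartic $C$ satisfies $X_5\cap\langle C\rangle=C\cup L$ for a fixed line $L$, so that $C=\bar\Psi(L,\langle C\rangle)$ lies in $Z$; consequently $\dim_{[C]}\bH_4(X_5)\ge \dim Z=8$ at every fixed point $[C]$.

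Then, at each fixed point $[C]$, I would compute the Zariski tangent space $T_{[C]}\bH_4(X_5)=\Hom_{X_5}(I_{C/X_5},\Oo_C)$ — which is the correct tangent space because every such $C$ is CM by Proposition~\ref{cmrat}, hence has no embedded points — directly from the explicit generators of $I_{C/X_5}$, and verify that $\dim T_{[C]}\bH_4(X_5)=8$. Granting this, the chain $8=\dim Z\le \dim_{[C]}\bH_4(X_5)\le \dim T_{[C]}\bH_4(X_5)=8$ forces equalities throughout, so the local ring at $[C]$ has embedding dimension equal to its Krull dimension and is therefore regular; as this holds at every $\CC^*$-fixed point, $\Sigma=\varnothing$ and $\bH_4(X_5)$ is smooth of dimension $8$.

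The main obstacle is this last tangent-space computation. For the smooth rational normal quartic of Example~\ref{ex:irqu} it is routine, since the normal bundle is balanced with vanishing $\rH^1$. The difficulty is concentrated in the degenerate, non-reduced configurations such as $4l_2$, $3l_0+l_2$, and $2l_0+2l_2$, where $C$ is far from a local complete intersection and the ideal quotient $\Hom_{X_5}(I_{C/X_5},\Oo_C)$ must be evaluated explicitly from the generators in Table~\ref{table_C4}. Here I would exploit the involution $a_i\leftrightarrow a_{-i}$, which is induced by an element of $\SL_2$ and therefore preserves tangent-space dimensions; this identifies the symmetric curves and roughly halves the number of genuinely distinct computations.
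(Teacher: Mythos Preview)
Your approach is correct and closely parallels the paper's: both reduce smoothness to the $\CC^*$-fixed locus and verify $\dim T_{[C]}\bH_4(X_5)=8$ there, which the paper carries out via Macaulay2 --- precisely the explicit ideal-theoretic computation you outline for the curves in Table~\ref{table_C4} and Example~\ref{ex:irqu}.

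The one substantive difference is the source of the lower bound $\dim_{[C]}\bH_4(X_5)\ge 8$. The paper invokes the irreducibility result (Proposition~\ref{irred}), proved in the next subsection, to conclude that the Hilbert scheme has pure dimension $8$. You instead observe that every $\CC^*$-fixed curve lies in the $8$-dimensional image $Z=\bar\Psi(\bG)$, so the lower bound follows directly, without appealing to irreducibility. Your route is logically cleaner in that it removes the forward reference to Proposition~\ref{irred} and makes smoothness independent of the dimension estimates in Section~\ref{sub:irred}. The cost is that you must know every fixed $[C]$ actually lies in the image of $\bar\Psi$; this is asserted in the analysis preceding Table~\ref{table_C4} and follows from Propositions~\ref{idquot} and~\ref{cmrat} (the kernel of $\Oo_{X_5\cap\langle C\rangle}\twoheadrightarrow\Oo_C$ is pure one-dimensional with Hilbert polynomial $m-1$, hence isomorphic to $\Oo_L(-2)$ for a line $L$), though neither you nor the paper spells out this reverse direction carefully.

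One small remark: the identification $T_{[C]}\bH_4(X_5)=\Hom_{X_5}(I_{C/X_5},\Oo_C)$ holds for \emph{any} closed subscheme $C$, so the CM hypothesis from Proposition~\ref{cmrat} is not needed at that particular step.
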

\begin{proof}
For each $\CC^*$-fixed curve $C$ in Table \ref{table_C4}, one can check that the dimension of the deformation space at $C$ is $8$-dimensional by using \texttt{Macaulay2} (\cite{M2}). For the code used in our computation, see Example~\ref{excode} below. Also, in the next subsection, we will prove the irreducibility of $\bH_4(X_5)$. Hence, we finish the proof.
\end{proof}

\begin{example}\label{excode}
The dimension of the deformation space at the multiplicity $4$-line (No.~$15$ in Table~\ref{table_C4}) can be computed using the following \texttt{Macaulay2} code.
\lstset{
  basicstyle=\ttfamily\small,
  columns=fullflexible,
  keepspaces=true,
  breaklines=true,
  breakatwhitespace=true,
  showstringspaces=false
}
\begin{lstlisting}
R = QQ[a6,a4,a2,a0,am2,am4,am6];
--- Negative weights are denoted by 'm'
I = ideal(a6*am2 - 4*a4*a0 + 3*a2^2, a6*am4 - 3*a4*am2 + 2*a2*a0,
a6*am6 - 9*a2*am2 + 8*a0^2, a4*am6 - 3*a2*am4 +2*a0*am2, a2*am6 - 4*a0*am4 + 3*am2^2); 
--- The defining ideal of the quintic del Pezzo threefold
J = ideal(a2, a6, 3*am2^2 - 4*a0*am4, 2*a0*am2 + a4*am6, a4*am2, a0^2, a4*a0, a4^2);
--- defining ideal of the multiplicity 4 line
numcols basis(0,Hom(J/I,R/J))
--- The dimension of the deformation space at R/J
\end{lstlisting}
\end{example}

\begin{remark}
For a pair $(l ,\PP^4)$, $l\subset \PP^4$ we have unique rational quartic curve $C$. Let us assume that there exists a hyperplane $H(\cong \PP^5)$ which contains $\PP^4$ and the hyperplane section $H\cap X$ is a smooth del Pezzo surface. Then from the spectral sequence \cite[Lemma 4.16]{CKK24+}, one can easily check that the Hilbert scheme $\bH_4(X)$ is smooth at the closed point $[C]$.
\end{remark}

\subsection{Irreducibility of the Hilbert scheme}\label{sub:irred}

The locally free resolutions of rational curves of lower degrees in $X_5$ were presented in \cite{San14}. Let $\cU$ (resp. $\cQ$) be the universal sub(resp. quotient)-bundle on $\Gr(2, V_5)$.
We use the same notation to denote its restriction to $X_5$.
\begin{proposition}[\protect{\cite[Proposition 2.20, 2.32 and 2.46]{San14}}]\label{pro:rel3}
Under the above definition and notation, for each rational curve $C_d$ with degree $\deg (C_d)=d\leq 3$, the locally free resolution of an ideal sheaf $I_{C_d/X_5}$ is given by
\begin{enumerate}
\item ($d=1$) $0\lr\cU\lr\cQ^* \lr I_{C_1/X_5} \lr 0$,
\item ($d=2$) $0\lr\cO_{X_5}(-1)\lr\cU\lr I_{C_2/X_5} \lr 0$,
\item ($d=3$) $0\lr\cO_{X_5}(-1)^{\oplus 2}\lr\cQ(-1) \lr I_{C_3/X_5} \lr 0$.
\end{enumerate}
\end{proposition}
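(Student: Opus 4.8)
The plan is to realize each of the three resolutions as the two-term complex attached to a natural morphism of tautological bundles on $X_5$, where the morphism is read off from the linear datum that cuts out $C_d$, and then to verify exactness by a degeneracy-locus analysis. Throughout I use the tautological sequence $\ses{\cU}{V_5\otimes\cO_{X_5}}{\cQ}$, the identities $\cO_{X_5}(1)=\wedge^2\cU^*=\det\cQ$ and $\cU^*\cong\cU\otimes\cO_{X_5}(1)$ (valid since $\cU$ has rank two), the equalities $V_5=\rH^0(\cQ)$ and $V_5^*=\rH^0(\cU^*)$ inherited from $\Gr(2,V_5)$, the fact that $\Pic(X_5)=\ZZ\langle\cO_{X_5}(1)\rangle$, and the descriptions $\bH_1(X_5)\cong\PP^2$, $\bH_2(X_5)\cong\Gr(4,V_5)$, $\bH_3(X_5)\cong\Gr(2,V_5)$ recalled above.

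\emph{Construction of the maps.} For $d=1$, a skew form $\omega$ in the net cutting out $X_5$ (the three linear relations in \eqref{pluckeq}) induces $\tilde\omega\colon V_5\to V_5^*$; since every $V_2\in X_5$ is $\omega$-isotropic, the composite $\cU\to V_5\otimes\cO_{X_5}\xrightarrow{\tilde\omega}V_5^*\otimes\cO_{X_5}\to\cU^*$ vanishes on $X_5$, so $\tilde\omega$ factors through a map $\cU\to\cQ^*$, with $\omega\in\PP(\mathrm{net})\cong\bH_1(X_5)$ parametrizing the line it produces. For $d=2$, a linear form $\xi\in V_5^*=\rH^0(\cU^*)$ is, through $\cU^*\cong\cU(1)$, the same datum as a map $\cO_{X_5}(-1)\to\cU$; its zero locus on the Grassmannian is $\Gr(2,\ker\xi)$, so over $X_5$ it is the conic $C_2=\Gr(2,V_4)\cap P$ of Section~\ref{fxcon}. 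For $d=3$, the pencil $U_2\subset V_5=\rH^0(\cQ)$ indexing $[C_3]\in\Gr(2,V_5)$ gives two sections $U_2\otimes\cO_{X_5}\to\cQ$, whose rank drops exactly along $\sigma_1(U_2)$, i.e. over $X_5$ along $C_3=\sigma_1(U_2)\cap P$; twisting by $\cO_{X_5}(-1)$ yields $\cO_{X_5}(-1)^{\oplus2}\to\cQ(-1)$.

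\emph{Verification.} In every case the source of the map is locally free, so a pointwise rank check at a single point of $X_5$ already forces the sheaf map to be injective. Its cokernel $\cG$ is then torsion-free of rank one as soon as the degeneracy locus has codimension two, which is guaranteed by the identifications of $\bH_d(X_5)$ recalled above (each $C_d$ is genuinely a curve). A Chern-class computation along the complex gives $\det\cG=\cO_{X_5}$, so $\Pic(X_5)=\ZZ$ forces $\cG\cong I_{W/X_5}$ for the Fitting scheme $W$ of the degeneracy; reading the Hilbert polynomial off the complex yields $\chi(\cO_W(m))=dm+1$, and since $W_{\mathrm{red}}=C_d$ this pins down $W=C_d$. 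For $d=2$ the complex is simply the Koszul complex of the regular section $\xi$, whose leading term is $\det\cU=\cO_{X_5}(-1)$, giving $\ses{\cO_{X_5}(-1)}{\cU}{I_{C_2/X_5}}$ directly.

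The main obstacle is the scheme-theoretic identification of the Fitting locus with $C_d$ \emph{uniformly} over $\bH_d(X_5)$, including the degenerate members of each family --- the reducible and multiple curves of Table~\ref{table_C3} and the configurations of Figure~\ref{Contw}. Checking that the ideal of maximal minors of $\tilde\omega$, the vanishing ideal of $\xi$, and the ideal of minors of $U_2\otimes\cO_{X_5}\to\cQ$ each agree with the flat ideal sheaf $I_{C_d/X_5}$ --- and in particular acquire no embedded points along the non-reduced loci --- is precisely where the special geometry of $X_5$ (encoded in the net of skew forms, and responsible for $\bH_1(X_5)\cong\PP^2$) enters. I would carry this out by an explicit computation in the Plücker/orbit coordinates of Proposition~\ref{cochang}, done fibrewise over $\bH_d(X_5)$, which has the further benefit that the three maps then glue to the universal resolutions over the respective Hilbert schemes.
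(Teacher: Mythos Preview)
The paper does not supply a proof: Proposition~\ref{pro:rel3} is simply quoted from \cite[Propositions~2.20, 2.32, 2.46]{San14} and used as input in Section~\ref{sub:irred}. Your outline is the standard construction and is essentially sound, so there is nothing in the present paper to compare it against.

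Two points on the sketch itself. First, you write that ``$W_{\mathrm{red}}=C_d$ \dots\ pins down $W=C_d$'' and then, two sentences later, call this same scheme-theoretic identification the ``main obstacle''; only the second attitude is correct, since for $d\geq2$ there exist distinct CM subschemes of $X_5$ with the same support and Hilbert polynomial (Table~\ref{table_C3} already illustrates this for $d=3$). The fix, however, is lighter than the coordinate computation you propose: Fitting ideals commute with base change, so the degeneracy scheme of $U_2\otimes\cO_{X_5}\to\cQ$ is the scheme-theoretic intersection $\sigma_1(\PP(U_2))\cap P$, which is exactly how Section~\ref{sub:fixc} \emph{defines} $C_3$; the case $d=2$ is already the Koszul zero scheme of a regular section; and for $d=1$ the degeneracy locus is the line with vertex $\ker\tilde\omega$, one-dimensional because every nonzero form in the net has rank~$4$ (this being equivalent to the smoothness of $X_5$). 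Second, the torsion-freeness of $\cG$ that you assert without justification is most cleanly obtained by invoking the Hilbert--Burch theorem for a rank $2\to3$ map whose maximal minors have codimension two: this yields $\coker\phi\cong(\det F)\otimes(\det E)^{-1}\otimes I_W=I_W$ directly and settles the scheme structure in the same breath.
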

In particular, the resolution of a conic $C_2$ is a regular section of the dual bundle $\cU^*$ and thus its normal bundle in $X_5$ is isomorphic to $N_{C_2/X_5}\cong \cU^*|_{C_2}$ (\cite[Corollary 2.33]{San14}).
\begin{proposition}\label{irred}
The Hilbert scheme $\bH_4(X_5)$ of rational quartic curves in $X_5$ is irreducible.
\end{proposition}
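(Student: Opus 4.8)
The plan is to prove irreducibility by combining the injective morphism $\bar\Psi:\bG\to\bH_4(X_5)$ from Proposition~\ref{bij_morph} with a dimension count that rules out any extra irreducible component. Since $\bG$ is itself irreducible (it is a $\Gr(3,5)$-bundle, equivalently $\Gr(2,\cF)$, over the irreducible base $\PP^2$) of dimension $2+2\cdot 3=8$, its image under $\bar\Psi$ is an irreducible $8$-dimensional closed subvariety $Z:=\bar\Psi(\bG)\subseteq \bH_4(X_5)$. By Proposition~\ref{pro:sm}, $\bH_4(X_5)$ is smooth, hence locally irreducible, and every irreducible component has dimension exactly $8$ (this is the expected dimension $\chi(N_{C/X_5})=8$, which I would confirm via the deformation computation already invoked in Proposition~\ref{pro:sm}). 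So it suffices to show that $Z$ is all of $\bH_4(X_5)$, i.e. that there is no irreducible component disjoint from the image of $\bar\Psi$.

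First I would identify what curves could possibly lie outside $Z$. Every CM-curve $C$ with Hilbert polynomial $4m+1$ and $\langle C\rangle=\PP^4$ arises as the residual of a line in a codimension-two linear section (by Proposition~\ref{cmrat} and the residual construction of Section~\ref{redsec}), hence lies in $Z$; and non-CM curves reduce to the same picture via Proposition~\ref{idquot}(2) together with the nonexistence result Proposition~\ref{noncm4}. Thus the only curves that could escape $Z$ are \emph{multiple} (non-reduced) curves supported on lower-degree rational curves --- chains and thickenings appearing in the boundary, as catalogued in Figure~\ref{fig:re4} and Table~\ref{table_C4}. The strategy is therefore to show that the locus of such degenerate quartics has dimension strictly less than $8$, so that it cannot form a separate component and must be contained in the closure $Z$.

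The main technical step --- and the expected principal obstacle --- is to bound the dimension of the locus of multiple-structure quartics. Here I would invoke the stable-maps comparison of Lehmann--Tanimoto \cite[Theorem 1.5]{LT19}, which guarantees a \emph{unique} irreducible component of the Kontsevich space $\overline{M}_{0,0}(X_5,4)$ whose general member is birational onto its image; this component is precisely the one dominating $Z$ via the map sending a stable map to (the residual line of) its image. Any component consisting of multiple covers or of maps with contracted/degenerate components is then accounted for separately, and I would estimate its dimension directly using the normal-bundle technique of \cite[Theorem I.2.15]{Kol96} --- exactly the method already referenced in Proposition~\ref{noncm4} to bound the CM-locus by $7$. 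Concretely, a quartic supported on a cubic-plus-line or a doubled conic has its reduced structure varying in the known families $\bH_d(X_5)\cong\Gr(2,V_5),\Gr(4,V_5),\PP^2$ of Section~\ref{sec:rat}, and the multiple structure contributes only finitely many or low-dimensionally many choices; summing these contributions should keep the total below~$8$.

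Finally, I would assemble the pieces: the locus of irreducible (birational-image) quartics is irreducible of dimension $8$ by Lehmann--Tanimoto and coincides with a dense open subset of $Z$; the degenerate locus has dimension $<8$; and since $\bH_4(X_5)$ is smooth with all components of dimension $8$, the degenerate locus cannot be a component and lies in $\overline{Z}=Z$. Hence $\bH_4(X_5)=Z$ is irreducible. The delicate point to get right is the dimension bookkeeping for the non-reduced strata in Table~\ref{table_C4}, ensuring that \emph{no} configuration --- in particular the thickened conics ($2C_2$, case~7) and the triple/quadruple line structures --- secretly sweeps out a full $8$-dimensional family; verifying this case-by-case using the explicit $\CC^*$-fixed ideals and the tangent-space computation of Proposition~\ref{pro:sm} is where the real work lies.
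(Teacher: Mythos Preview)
Your plan contains a circularity: you invoke Proposition~\ref{pro:sm} to conclude that every component has dimension exactly $8$, but in the paper's logical order the proof of Proposition~\ref{pro:sm} \emph{uses} irreducibility---the tangent-space computation is carried out only at the $30$ torus-fixed points of $\bG$, and one needs irreducibility to know these exhaust the fixed locus of $\bH_4(X_5)$. The repair is easy and is what the paper actually does: use only the \emph{lower} bound $\dim\geq 8$, which follows from $\chi(I_{C/X_5},\cO_C)=8$ together with \cite[Theorem~I.2.15]{Kol96}. With that fix, your fallback strategy---Lehmann--Tanimoto for the reduced locus, plus a stratum-by-stratum bound $\leq 7$ on the non-reduced curves---is exactly the paper's proof; the case analysis you flag as ``the real work'' is carried out there via the resolutions of Proposition~\ref{pro:rel3} and the multiplicity-structure machinery of \cite{BF86,Nol97,NS03}, running through $\deg C_{\mathrm{red}}=3,2,1$.

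That said, your second paragraph already contains a shorter argument that you abandon too quickly. You assert that every CM-curve $C$ lies in $Z$, but then hedge by excluding ``multiple (non-reduced) curves''---yet there is no reason to: Proposition~\ref{cmrat} says \emph{every} such $C$ is CM with $\langle C\rangle=\PP^4$, reduced or not. Setting $E_5:=X_5\cap\langle C\rangle$, this is a codimension-two linear section, hence a CM curve with Hilbert polynomial $5m$, and the kernel of $\cO_{E_5}\twoheadrightarrow\cO_C$ is a pure one-dimensional subsheaf with Hilbert polynomial $m-1$, hence $\cO_L(-2)$ for a line $L\subset E_5\subset X_5$. By Proposition~\ref{idquot} the inclusion $\cO_L(-2)\hookrightarrow\cO_{E_5}$ is the unique nonzero such map up to scalar, so $C=\bar\Psi(L,\langle C\rangle)$. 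This proves $\bar\Psi$ surjective, hence bijective, and irreducibility of $\bH_4(X_5)$ follows from that of $\bG$---no Lehmann--Tanimoto, no dimension count on multiple structures. The paper takes the longer route; yours, once you commit to it, is cleaner.
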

\begin{proof}
Let $[C]\in \bH_4(X_5)$.
We denote by $C_{\text{red}}$ the reduced scheme of the curve $C$.
Note that the expected dimension of the Hilbert scheme $\bH_4(X_5)$ at $[C]$ is
\begin{equation*}
  \dim \Ext^0(I_{C/X_5}, \cO_C)-\dim \Ext^1(I_{C/X_5}, \cO_C)=8
\end{equation*}
by the stability of the structure sheaf $\cO_C$ (\cite[Lemma 3.17]{CC11}) and $K_{X_5}=-2H$.
Hence, each connected component of the Hilbert scheme $\bH_4(X_5)$ containing the curve $C$ has dimension at least $8$ (\cite[Theorem I.2.15]{Kol96}). Since the space of reduced rational quartic curves in $X_5$ is irreducible by \cite[Theorem 7.9]{LT19}, it is enough to prove that each locus of reducible curves with fixed types of a reduced curve has dimensions at most $7$. Note that the boundary of the Hilbert scheme $\bH_3(X_5)\cong \Gr(2, 5)$ is $5$-dimensional. We will estimate the upper bound of the dimension of multiple structures over a fixed reduced curve. For the detailed description of the multiple structure of a CM-curve in smooth projective threefolds, see the papers \cite{BF86, Nol97, NS03}.

\noindent
\textbf{Case 1.} $\deg(C_{\text{red}})=3$. Let us denote by $D=C_{\text{red}}$. If $p_a(D)\neq 0$, then $D$ is a planar curve with degree $3$, which violates the fact that $X_5$ does not contain any plane. Hence, $p_a(D)=0$ and thus structure sequence is given by
\begin{equation}\label{red3}
\ses{I_{D/C}}{\cO_C}{\cO_D}.
\end{equation}
Clearly, the Hilbert polynomial of the ideal sheaf $I_{D/C}$ is $\chi(I_{D/C}(m))=m$. Hence, it is supported on a line $L\subset C$. However, $C$ is a CM-curve and thus $I_{D/C}\cong \cO_L(-1)$. On the other hand, the non-split extensions of \eqref{red3} are  parametrized by \[\Ext^1(\cO_D, \cO_L(-1))\cong \Hom(I_{D/X_5}, \cO_L(-1))\] up to the scalar multiplication (cf. \cite[Example 2.1.12]{HuLe10}). However, one can easily see that 
\begin{equation*}
   \dim \Hom(I_{D/X_5}, \cO_L(-1))\leq 2
\end{equation*}
from the locally free resolution of $I_{D/X_5}$ in Proposition \ref{pro:rel3}.
Hence, the locus of non-split extensions of \eqref{red3} is at most $1$-dimensional.

\noindent
\textbf{Case 2.} $\deg(C_{\text{red}})=2$.
The possible support of $C_{\text{red}}$ is a smooth conic or a pair $L_1\cup L_2$ $L_1\neq L_2$ of lines meeting at a point.

\noindent
\textbf{Case 2-1.} $[C]=2 [Q]$ for some smooth conic $Q\subset X_5$. Such a locus is at most $1$-dimensional. The details are as follows. From the inclusion $Q\subset C$, there is a structure sequence
\begin{equation}\label{eq:qud}
\ses{\cO_Q(-p)}{\cO_C}{\cO_Q}
\end{equation}
for some $p\in Q$. Note that the line bundle $\cO_Q(-p)$
on $Q$ is isomorphic to each other for all $p\in Q$. Conversely, the long exact sequence (\cite[Lemma 4.16]{CKK24+}) 
\[
0=\Ext_Q^1(\cO_Q,\cO_Q(-p))\lr \Ext_{X_5}^1(\cO_Q,\cO_Q(-p)) \lr \rH^0(N_{Q/X_5}\otimes O_Q(-p))\cong\CC^2\lr
\cdots\]
says that non-split extensions in \eqref{eq:qud} are parametrized by $\PP^1$ or a point because $N_{Q/X_5}\cong \cU^*|_{Q}$ (Proposition \ref{pro:rel3}). Hence, we proved the claim.

\noindent
\textbf{Case 2-2.} $[C]=2[L_1]+2[L_2]$, $L_1\neq L_2$ for some pair of lines $L_1$ and $L_2$ meeting at a point. The double structure of $D=L_1\cup L_2$ is at most $3$-dimensional. In detail, since $D$ is a locally complete intersection, a double structure on the curve $D$ is determined by a surjective morphism
\begin{equation}\label{eq:sudou}
\psi: N_{D/X_5}^*=I_{D/X_5}/I_{D/X_5}^2\twoheadrightarrow \cL
\end{equation}
for a line bundle $\cL$ on $D$ with $\chi(\cL(m))=2m$ (\cite[Section 1]{BF86}). As tensoring the line bundle $\cL$ into the structure sequence $\ses{\cO_{L_2}(-1)}{\cO_D}{\cO_{L_1}}$, we have 
\begin{equation*}
   \ses{\cO_{L_2}(-a-2)}{\cL}{\cO_{L_1}(a)}, \ a\in \ZZ.
\end{equation*}
From the fact that $N_{D/X_5}\cong \cU^*|_D$ (Proposition \ref{pro:rel3}), $\Hom(N_{D/X_5}^*, \cO_{L_i}(k))\neq 0$ if and only if $k\geq -1$.
From this, one can check whenever $\psi$ is surjective, $a=-1$ or $0$.
If $a=-1$, then $\Hom(N_{D/X_5}^*, \cO_{L_i}(-1))\cong \CC$ because of $N_{D/X_5}\cong \cU^*|_D$ again. Hence in this case, $\dim \Hom(N_{D/X_5}^*, \cL)\leq 2$.
Note that if we replace the role of the lines $L_1$ and $L_2$, we may obtain another family of line bundles $\cL$. On the other hand, by upper semicontinuity and $\dim T_{[L_1]}\bH_1(X_5)=2$ (\cite[Section 2]{FN89}), we have 
\begin{equation*}
 \dim\Ext^1(\cO_{L_1}, \cO_{L_2})\leq \dim\Ext^1(\cO_{L_1}, \cO_{L_1})=2.
\end{equation*}
Hence, such a line bundle $\cL$ is at most $1$-dimensional.
After all, considering the case $a=0$ as well, the space of surjective morphisms $\phi$ in \eqref{eq:sudou} is at most $3$-dimensional.
\\
\noindent
\textbf{Case 3.} Let $[C]=3[L_1]+[L_2]$, $L_1\neq L_2$ for a pair of lines $L_1$ and $L_2$ meeting at a point. Such a locus is at most $4$-dimensional. Let us consider degree $3$ maximal CM-subcurve $C_3\subset C$ supported on $L_1$ which is obtained from the ideal quotient $(I_{C/X_5}: I_{L_2/X_5})$. Since the length of the intersection $C_3\cap L_2$ is $1\leq l(C_3\cap L_2)\leq 3$, the Hilbert polynomial of the curve $C_3$ is $\chi(\cO_{C_3}(m))=3m+k$, $1\leq k\leq 3$ (cf. \cite[Section 5]{NS03}). Also, the possible CM-filtrations of the curve $C_3$ are of types (cf. \cite[Section 2]{Nol97}): 
\begin{enumerate}[(a)]
\item $(m+1, 2m+1, 3m+1)$,
\item $(m+1, 2m+1, 3m+2)$,
\item $(m+1, 2m+1, 3m+3)$,
\item $(m+1, 2m+2, 3m+3)$,
\end{enumerate}
where each component of the $3$-tuple indicates the Hilbert polynomial of sub-curves in the filtration of $C_3$.
Note that a planar double line (i.e., Hilbert polynomial $2m+1$) exists only if the line is non-free (\cite[Section 1]{FN89}) and the non-planar double lines (i.e., Hilbert polynomial $2m+2$) are parametrized by the projectivization
 of the tangent space of a line in $X_5$.

For the  case (a), there is an exact sequence 
\begin{equation*}
 \ses{\cO_{L_2}(-1)}{\cO_{C}}{\cO_{C_3}},
\end{equation*} 
where $\chi(\cO_{C_3}(m))=3m+1$ and thus $C$ is determined by the same method as in Case 1. Note that the multiple curve $C_3$ supported on the non-free line $L_1$ is unique by \cite[Lemma 4.2]{Chu22}. Hence, the dimension of such a locus is $\leq 1$.

For the case (b) by diagram chasing, there is an exact sequence $\ses{F}{\cO_C}{\cO_{C_2}}$ where $F$ fits into the extension $\ses{\cO_{L_2}(-2)}{F}{\cO_{L_1}}$ and $C_2$ is a curve supported on $L_1$ with Hilbert polynomial $\chi(\cO_{C_2}(m))=2m+1$. In terms of ideal sheaves, we have an exact sequence $\ses{I_{C/X_5}}{I_{C_2/X_5}}{F}$. However, the map $I_{C_2/X_5}\lr F$ is at most $3$-dimensional because of $\Hom(I_{C_2/X}, \cO_{L_2}(-2))=0$ (Proposition \ref{pro:rel3}). Hence, such a non-trivial extension is at most $3+3=6$-dimensional. The same reasoning holds for the case (c).

For the case (d) by diagram chasing again, there exists an exact sequence 
\begin{equation*}
\ses{G}{\cO_C}{\cO_{C_2}}, 
\end{equation*}
where $G$ fits into the exact sequence 
$\ses{\cO_{L_2}(-3)}{G}{\cO_{L_1}}$ with $\chi(\cO_{C_2}(m))=2m+2$. In terms of ideal sheaves, we have an exact sequence
\begin{equation}\label{eq:3m+3}
\ses{I_{C/X_5}}{I_{C_2/X_5}}{G}.
\end{equation}
Conversely, we estimate the dimension of $\Hom(I_{C_2}, G)$. From the structure ideal sequence $\ses{I_{C_2/X_5}}{I_{L_1/X_5}}{\cO_{L_1}}$, one can easily see that
\[\dim \Hom(I_{C_2/X_5}, \cO_{L_1})\leq 2 \ \text{and} \ \dim \Hom(I_{C_2/X_5}, \cO_{L_2}(-3))\leq 1.\]
Therefore, the dimension of $C$'s in \eqref{eq:3m+3} is at most 
\begin{equation*}
 \dim \Hom(I_{C_2/X_5}, G)+\dim \PP\Ext^1(\cO_{L_1}, \cO_{L_1})\leq(2+1)+1=4.
\end{equation*}


\noindent
\textbf{Case 4.} $\deg(C_{\text{red}})=1$.
Clearly, $C_{\text{red}}$ is a line. There exists a one-dimensional family of quadruple structure on a non-free line in $X_5$. This case is completely analogous to the first paragraph of the proof in \cite[Proposition 4.14]{CKK24+}. So we skip the proof.
\end{proof}




\end{document}